\title[Essential codimension]{Remarks on essential codimension}
\author{Jireh Loreaux}
\address{Department of Mathematics and Statistics\\
Southern Illinois University Edwardsville\\
1 Hairpin Dr.\\
Edwardsville, IL\\
62026-1653\\ USA}
\email{jloreau@siue.edu}
\author{P. W. Ng}
\address{Department of Mathematics\\
University of Louisiana at Lafayette\\
217 Maxim Doucet Hall\\
P. O. Box 43568\\
Lafayette, Louisiana\\
70504--3568\\
USA}
\email{png@louisiana.edu}
\newtheorem{thm}{Theorem}[section]
\newtheorem{lem}[thm]{Lemma}
\newtheorem{df}[thm]{Definition}
\newcommand{\A}{\mathcal{A}}
\newcommand{\B}{\mathcal{B}}
\newcommand{\C}{\mathcal{C}}
\newcommand{\D}{\mathcal{D}}
\newcommand{\F}{\mathcal{F}}
\newcommand{\K}{\mathcal{K}}
\newcommand{\Mul}{\mathcal{M}}
\newcommand{\M}{\mathbb{M}}
\numberwithin{equation}{section}
\DeclareMathOperator{\diam}{diam}
\DeclareMathOperator{\supp}{supp}
\subjclass[2010]{Primary 19K35, 19K56; Secondary 46L80, 47C15, 47B15}
\keywords{essential codimension, proper asymptotic unitary equivalence, $KK$-theory, extension theory, C*-algebras}
\begin{document}

\maketitle

\begin{abstract}
We look for generalizations of the Brown--Douglas--Fillmore essential
codimension result, leading to interesting local uniqueness theorems in
KK theory.
We also study the structure of Paschke dual algebras.
\end{abstract}

\section{Introduction}

The notion of \emph{essential codimension} was introduced by
Brown--Douglas--Fillmore (BDF) in their groundbreaking paper
\cite{BDF1} where they classified all essentially normal operators
using Fredholm indices.  Since then, this notion has had manifold
applications (e.g., \cite{BCP+-2006-Agatoeo,CPS-2003-AM}).
This includes, among other
things, an explanation for the mysterious integers appearing
in Kadison's Pythagorean theorem (\cite{Kad-2002-PNASU,Kad-2002-PNASUa,KL-2017-IEOT,Lor-2018}) as well as other Schur--Horn type
results (\cite{BJ-2015-TAMS,Jas-2013-JFA}).

Here is the BDF definition of essential codimension:

\begin{df} (BDF)
Let $P, Q \in \mathbb{B}(l_2)$ be projections such that
$P - Q \in \K$.
The \emph{essential codimension} of $P$ and $Q$ is given by
\[
[P:Q] =_{df}
\begin{cases}
Tr(P) - Tr(Q) & \makebox{ if } Tr(P) + Tr(Q) < \infty\\
Ind(V^*W) & \makebox{ if } Tr(P) = Tr(Q) = \infty,
\makebox{ where } V^*V = W^*W =1,\\
& WW^* = P, \makebox{ } V V^* = Q.
\end{cases}
\]
In the above, ``$Ind$" means Fredholm index.
\label{df:essentialcodimension}
\end{df}

It is not hard to show that, if $Q \leq P$, then essential codimension
reduces to the usual codimension.
Basic properties of essential codimension and their proofs can be
found in \cite{BrownLee}.  We note that, given that $P - Q \in \K$,
the essential codimension essentially measures ``local differences".

A fundamental result on essential codimension which was stated in
\cite{BDF1} (a proof can be found in \cite{BrownLee})  is the following:

\begin{thm}
Let $P, Q \in \mathbb{B}(l_2)$ be projections such that
$P - Q \in \K$.

Then there exists a unitary $U \in \mathbb{C}1 + \K$ such that
$U P U^* = Q$ if and only if $[P:Q] = 0$.
\label{thm:BDF}
\end{thm}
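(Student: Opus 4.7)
The plan is to handle the two implications separately. For necessity, suppose $U = \lambda + K$ is a unitary in $\mathbb{C}1 + \K$ with $UPU^* = Q$; the condition $U^*U = 1$ forces $|\lambda| = 1$, so replacing $U$ by $\bar\lambda U$ I may assume $U = 1 + K$. If $Tr(P) + Tr(Q) < \infty$, then because unitary conjugation preserves the trace of a finite-rank projection, $Tr(Q) = Tr(UPU^*) = Tr(P)$, so $[P:Q] = 0$. When both traces are infinite, fix isometries $V, W$ with $VV^* = Q$ and $WW^* = P$. Then $UW$ is again an isometry and $(UW)(UW)^* = UPU^* = Q = VV^*$, so $UW$ and $V$ share their range and $V^*(UW)$ is therefore unitary, hence of Fredholm index $0$. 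Since $V^*(UW) - V^*W = V^*KW \in \K$, the operator $V^*W$ is a compact perturbation of a unitary, so Fredholm of the same index $0$, giving $[P:Q] = 0$.

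For sufficiency, I would invoke Halmos' two-projections theorem to decompose $l_2 = H_{11} \oplus H_{00} \oplus H_{10} \oplus H_{01} \oplus H_g$, where $H_{ij}$ is the joint eigenspace of $(P, Q)$ at $(i,j)$ and on the generic summand $H_g = L \oplus L$ one has
\[
P|_{H_g} = \begin{pmatrix} 1 & 0 \\ 0 & 0 \end{pmatrix}, \qquad Q|_{H_g} = \begin{pmatrix} c^2 & cs \\ cs & s^2 \end{pmatrix},
\]
for commuting positive operators $c, s$ on $L$ with $c^2 + s^2 = 1$ and no $\{0,1\}$-eigenvalues. The hypothesis $P - Q \in \K$ forces $H_{10}$ and $H_{01}$ to be finite-dimensional (since $(P-Q)|_{H_{10}} = 1$ and $(P-Q)|_{H_{01}} = -1$) and $s$ (and thus $c - 1 = -s^2/(c+1)$) to be compact on $L$. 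The Fredholm operator $R \colon Pl_2 \to Ql_2$, $x \mapsto Qx$, has kernel $H_{10}$ and cokernel $H_{01}$, and its index coincides with $[P:Q]$; so the vanishing hypothesis forces $\dim H_{10} = \dim H_{01}$. I then define $U$ as a direct sum: the identity on $H_{11} \oplus H_{00}$; any unitary exchange of $H_{10}$ with $H_{01}$, which exists since the dimensions agree and is finite-rank away from the identity; and on $H_g$ the rotation $U_g := \bigl(\begin{smallmatrix} c & -s \\ s & c \end{smallmatrix}\bigr)$. A direct matrix calculation gives $U_g^* U_g = 1$ and $U_g P U_g^* = Q$, while $U_g - 1 = \bigl(\begin{smallmatrix} c-1 & -s \\ s & c-1 \end{smallmatrix}\bigr)$ is compact because both $s$ and $c - 1$ are. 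Hence $U - 1 \in \K$ and $UPU^* = Q$.

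The main obstacle is that the generic summand $H_g$ may be infinite-dimensional even when $P - Q$ is compact (e.g., a countable direct sum of rank-one projections rotated by angles tending to $0$ yields such a configuration), so one cannot simply correct $P$ to $Q$ on a finite-dimensional subspace and extend by the identity. Compactness of the Halmos angle operator $s$ is the crucial extra input that makes the natural rotation $U_g$ lie in $1 + \K$, rescuing the construction on $H_g$. A secondary technicality is aligning the isometry-based definition of essential codimension with the Halmos decomposition in order to obtain the explicit formula $[P:Q] = \dim H_{10} - \dim H_{01}$ that closes the argument.
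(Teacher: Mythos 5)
Your argument is correct, but note that the paper itself does not prove Theorem \ref{thm:BDF} at all: it only states the result and refers to \cite{BrownLee} for a proof, and its own contribution is the $KK$-theoretic generalization in Section \ref{sec:essential-codimension}. Your route is a self-contained, elementary one via Halmos' two-projections theorem: the necessity direction (trace preservation in the finite case; $V^*(UW)$ unitary and $V^*W$ a compact perturbation of it in the infinite case) is exactly right, and in the sufficiency direction the key observations --- that $P-Q\in\K$ forces $\dim H_{10},\dim H_{01}<\infty$ and the angle operator $s$ (hence $c-1=-s^2(c+1)^{-1}$) compact, that $[P:Q]=\dim H_{10}-\dim H_{01}$, and that the rotation $U_g$ built from $c,s$ lies in $1+\K$ on the possibly infinite-dimensional generic part --- all check out. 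Two steps you pass over quickly are routine but worth writing down: Fredholmness of $R=Q|_{\mathrm{ran}\,P}\colon \mathrm{ran}\,P\to\mathrm{ran}\,Q$ (since $R^*R=PQP|_{\mathrm{ran}\,P}$ is a compact perturbation of the identity of $\mathrm{ran}\,P$), and the identification $\mathrm{Ind}(V^*W)=\mathrm{Ind}(R)$, which follows because $V^*W=(V^*|_{\mathrm{ran}\,Q})\,R\,(W\colon l_2\to\mathrm{ran}\,P)$ is $R$ composed with unitaries; in the finite-trace case the same formula $[P:Q]=\dim H_{10}-\dim H_{01}$ follows from counting dimensions in the decomposition. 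What your approach buys is an explicit unitary and a proof requiring nothing beyond the two-projections structure theory; what it does not give is a method that survives the passage from $\mathbb{B}(l_2)=\Mul(\K)$ to $\Mul(\B)$ for general stable $\B$, where no angle-operator analysis is available --- that is precisely why the paper replaces this picture by absorption/uniqueness techniques (Elliott--Kucerovsky, Paschke duality, proper asymptotic unitary equivalence) to obtain Theorems \ref{thm:Uniqueness0}, \ref{thm:Uniqueness1} and \ref{thm:BDFQuarterway}, which recover Theorem \ref{thm:BDF} as the special case $\B=\K$.
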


The main goal of this paper is to find generalizations of this result.
We are following the path first travelled on by \cite{BrownLee},
\cite{LeeFirst}, and \cite{LeeJFA2013} (see also, \cite{LinStableUniqueness}
and \cite{DadarEilersAsympUE}).
Lee (\cite{LeeFirst}) observed that essential
codimension is a basic example of $KK^0$, and thus the BDF essential codimension
result (Theorem \ref{thm:BDF}) is connected to powerful uniqueness theorems,
and our goal is to work out some of the operator theoretic consequences.

In Section~\ref{sec:paschke-dual-algebra} we undertake a study of the Paschke dual algebra $\A^d_{\B}$ of $\A$ relative to $\B$ in the context of when $\A$ is a unital separable nuclear C*-algebra and $\B$ is a separable stable C*-algebra.
In this setting we prove a number of results.
We first establish that the Paschke dual algebra is $K_1$-injective (Theorem~\ref{thm:K1injective} and Theorem \ref{thm:K1injectivePart2}) under certain
restrictions on the canonical ideal, which is essential for proving our theorems in Section \ref{sec:essential-codimension}.
We note that the Paschke dual algebra is a unital properly infinite
C*-algebra, and  it is an open problem whether every properly infinite unital
C*-algebra is $K_1$ injective\footnote{See, for example, \cite{BlanchardRohdeRordam}}.
We then prove that the Paschke dual algebra is dual in the sense that $\A$ and $\A^d_{\B}$ are each other's relative commutants in the corona algebra $\C(\B)$, where $\A$ is identified with its image under the Busby map (Theorem~\ref{thm:MorePaschkeDuality}).  This generalizes a remark of Valette (\cite{Valette}).
The key technique throughout this section is the Elliott--Kucerovsky theory of absorbing extensions \cite{ElliottKucerovsky}.

In Section~\ref{sec:essential-codimension} we prove a few theorems (Theorems \ref{thm:Uniqueness0} and \ref{thm:Uniqueness1}) which can be considered as generalizations of BDF's Theorem~\ref{thm:BDF} to the realm of $KK$-theory where the essential codimension is interpreted as an element of $KK^0$, and the unitary which is a compact perturbation of the identity is replaced by the notion of proper asymptotic unitary equivalence due to Dadarlat and Eilers \cite{DadarEilersAsympUE}.
In order to make this abstract notion of essential codimension more concrete, we simply take $\A = \mathbb{C}$ and, with a few modest hypotheses, arrive at a generalization of Theorem~\ref{thm:BDF} that bears true resemblance to it (see Theorem \ref{thm:BDFQuarterway}).

In Section~\ref{sec:TechnicalLemma},  we prove a technical lemma which is
used in one of the main results in a previous section.

In a separate paper,\footnote{J. Loreaux and P. W. Ng, Remarks on essential
codimension:  Lifting projections. Preprint.} we study the connection between
essential codimension and projection lifting.

\section{The Paschke dual algebra}
\label{sec:paschke-dual-algebra}

For a nonunital C*-algebra $\B$, $\Mul(\B)$ and $\C(\B)$ denote the
multiplier and corona algebras of $\B$ respectively.
$\pi : \Mul(\B) \rightarrow \C(\B)$ denotes the natural quotient map.

\begin{df}
Let $\A$ be a unital separable C*-algebra, let $\B$ be a separable
stable C*-algebra, and let
$\phi : \A \rightarrow \Mul(\B)$ be a unital absorbing trivial extension.
The \emph{Paschke dual algebra of $\A$ relative to $\B$} is defined to
be $\A^d_{\B} =_{df} (\pi \circ \phi(\A))' \in \C(\B)$.
Sometimes, to emphasize the map $\phi$, we will use the notation
$\D_{\phi} =_{df} \A^d_{\B}$.
\label{df:PaschkeDualAlgebra}
\end{df}
We note that $\A^d_{\B}$ is, up to *-isomorphism, independent of
$\phi$.  However, the map $\phi$ is quite important, and in many
treatments of Paschke duality,
one has ``$\phi$" in the notation.  Hence, we also use the alternate
notation ``$\D_{\phi}$".
There is also a definition for nonunital $\A$, but we focus on the unital
case where the definition is simpler (essentially Paschke's and Valette's
original definition).  We so name the Paschke dual algebra because of
Paschke duality, which asserts the existence of group isomorphisms
$K_j(\A^d_{\B}) \cong KK^{j+1}(\A, \B)$ for $j =0, 1$.  (See
\cite{HigsonPaschkeDual}, \cite{Paschke}, \cite{ThomsenAbsorption},
\cite{Valette}.) We will show below (Theorem \ref{thm:MorePaschkeDuality})
that the Paschke dual algebra is also dual in another sense, thus
generalizing a remark of Valette (\cite{Valette}).

Paschke (\cite{Paschke}) focused on the case where $\B = \K$.
However, many of his assertions and arguments remain true in
general.  Sometimes the modifications are straightforward and other
times they are quite nontrivial.

We  fix a notation from extension theory.  Let $\A, \B$ be
C*-algebras with $\B$ nonunital, and let $\phi, \psi : \A \rightarrow
\C(\B)$ be *-homomorphisms.
We say that $\phi$ and $\psi$ are \emph{unitarily equivalent} and
write
\begin{equation}
\phi \sim \psi
\label{equ:UE}
\end{equation}
if there exists a unitary $u \in \Mul(\B)$ such that
$$\pi(u) \phi(a) \pi(u)^* = \psi(a)$$
for all $a \in \A$.

The argument of the first result is very similar to that
of \cite{Paschke} Lemma 1, but every occurrence of Voiculescu's noncommutative Weyl--von Neumann theorem (\cite{Voiculescu})
is replaced with the Elliott--Kucerovsky theory of absorbing extensions
(\cite{ElliottKucerovsky}).  We go through the proof for the
convenience of the reader, expanding some details.

\begin{lem}
Let $\A$ be a unital separable nuclear C*-algebra, and let $\B$
be a separable stable C*-algebra.

Then we have the following:
\begin{enumerate}
\item[(a)] The unit of
$\A^d_{\B}$ is properly infinite.  In fact,
$1 \sim 1 \oplus 1$ in $\M_2 \otimes \A^d_{\B}$.
\item[(b)] The stable equivalence classes of projections in $\A^d_{\B}$
constitute all of $K_0(\A^d_{\B})$.
\end{enumerate}
\label{lem:K0PaschkeDualAlgebra}
\end{lem}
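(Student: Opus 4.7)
The plan for (a) is to exploit the absorbing property of $\phi$ together with the stability of $\B$. Choose isometries $v_1, v_2 \in \Mul(\B)$ with $v_i^* v_j = \delta_{ij}$ and $v_1 v_1^* + v_2 v_2^* = 1$ (available by stability), and define a unital trivial extension $\psi : \A \to \Mul(\B)$ by
\[
\psi(a) := v_1 \phi(a) v_1^* + v_2 \phi(a) v_2^*,
\]
which realizes $\phi \oplus \phi$ after the identification $\Mul(\M_2 \otimes \B) \cong \Mul(\B)$. Because $\phi$ is absorbing, Elliott--Kucerovsky furnishes a unitary $U \in \Mul(\B)$ with $\pi(U) \pi(\psi(a)) \pi(U)^* = \pi(\phi(a))$ for every $a \in \A$. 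Setting $s_i := \pi(U v_i) \in \C(\B)$ gives $s_i^* s_j = \delta_{ij}$ and $s_1 s_1^* + s_2 s_2^* = 1$. Rewriting the intertwining relation as $\sum_i s_i \pi(\phi(a)) s_i^* = \pi(\phi(a))$ and multiplying on the right by $s_j$ yields $s_j \pi(\phi(a)) = \pi(\phi(a)) s_j$, so $s_1, s_2 \in \A^d_\B$. These two orthogonal isometries assemble into a partial isometry $V \in \M_2 \otimes \A^d_\B$ (the matrix with top row $(s_1, s_2)$ and bottom row zero) satisfying $V^*V = 1 \oplus 1$ and $VV^* = 1 \oplus 0$, witnessing $1 \sim 1 \oplus 1$.

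For (b), I would deduce from (a) that $[1] = 2[1]$ in $K_0(\A^d_\B)$, hence $[1] = 0$, and then run the standard reduction available in any properly infinite unital C*-algebra. Iterating (a) produces, for each $n$, a partial isometry $w_n \in \M_n \otimes \A^d_\B$ with $w_n^* w_n = 1_{\M_n \otimes \A^d_\B}$ and $w_n w_n^* = e_{11} \otimes 1$; conjugation by $w_n$ sends any projection $p \in \M_n \otimes \A^d_\B$ to a Murray--von Neumann equivalent projection in the corner $e_{11} \otimes \A^d_\B \cong \A^d_\B$. Thus every stable equivalence class of a projection in $\M_n \otimes \A^d_\B$ is represented in $\A^d_\B$ itself. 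Given any $x = [p] - [q] \in K_0(\A^d_\B)$, I may therefore assume $p, q$ lie in $\A^d_\B$; then $[1] = 0$ gives $x = [p] + [1-q] = [p \oplus (1-q)]$, and applying the reduction to $p \oplus (1-q) \in \M_2 \otimes \A^d_\B$ produces a single projection $r \in \A^d_\B$ with $[r] = x$.

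The main obstacle is the correct application of Elliott--Kucerovsky to produce $U$: one needs $\psi$ to be absorbed by $\phi$, which amounts to the preservation of the absorbing property under forming direct sums with trivial extensions together with the stabilization isomorphism $\Mul(\M_2 \otimes \B) \cong \Mul(\B)$. This is exactly the ingredient flagged in the paragraph preceding the lemma as the replacement for Voiculescu's theorem in Paschke's original argument. Once $U$ is in hand, the remainder is routine manipulation of orthogonal isometries and corner reductions inside the corona algebra.
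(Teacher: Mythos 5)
Your proof is correct and follows essentially the same route as the paper's: use the absorbing property of a unital trivial absorbing extension to conjugate $\phi \oplus \phi$ onto $\phi$ (modulo $\B$), and observe that the resulting isometries land in the Paschke dual, witnessing $1 \sim 1 \oplus 1$ in $\M_2 \otimes \A^d_{\B}$; part (b) then follows from proper infiniteness of the unit by the standard corner-reduction argument. Your version is more explicit in producing the Cuntz isometries $v_1, v_2$ and the intertwining unitary $U$ — the paper's $\M_2$-valued isometry $v$ is precisely your matrix $V$ with entries $s_i = \pi(U v_i)$ — and you also spell out the deduction of (b), which the paper leaves implicit.
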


\begin{proof}
(a):
Let $\phi : \A \rightarrow \Mul(\B)$ be a unital trivial absorbing
extension.  Hence, we may identify
$\A= \pi \circ \phi (\A) \subset \C(\B)$, and we may thus
view $\A$ as a unital C*-subalgebra of $\C(\B)$.
And by \cite{ElliottKucerovsky}, the inclusion map
$\iota : \A \hookrightarrow \C(\B)$ is a unital trivial absorbing
extension. (For triviality, note that
the map $\phi(\A) \rightarrow \Mul(\B) :
\pi \circ \phi(a) \mapsto \phi(a)$ is a
*-homomorphism, and note that we are identifying
$\A = \pi \circ \phi(\A)$.)

We may also identify $\A^d_{\B} = (\pi \circ \phi (\A))' \subseteq
\C(\B)$.

Since $\iota$ is trivial and absorbing
$$\iota \oplus \iota \sim \iota.$$
Therefore, there exists an isometry
$\widetilde{v} \in \M_2 \otimes \Mul(\B)$ such that
$$v ( \iota \oplus \iota ) v^* = \iota \oplus 0$$
where $v =_{df} \pi(\widetilde{v})$.
In particular, we have that
\[
v(x \oplus x )v^* = x \oplus 0
\]
for all $x \in \A$.
Hence, since $\A$ is unital,

\begin{equation}
v^* v = 1 \oplus 1 \makebox{  and  } v v^* = 1 \oplus 0.
\label{equ:July2820183PM}
\end{equation}

From the above, we have that for all $x \in \A$,
\begin{eqnarray*}
v(x \oplus x) & = & (x \oplus 0) v \\
& = & (x \oplus x) v \makebox{   (since $vv^* = 1 \oplus 0$).  }
\end{eqnarray*}
Hence, $v \in \M_2 \otimes \A^d_{\B}$.
From this and (\ref{equ:July2820183PM}),
the unit of $\A^d_{\B}$ is Murray--von Neumann equivalent to two copies
of itself.

(b):  This follows immediately from (a).
\end{proof}

We note that it is an open problem whether every unital properly infinite
C*-algebra is $K_1$ injective \cite{BlanchardRohdeRordam}, and the Paschke dual algebra is an interesting
and important case of this.  We now move towards proving
$K_1$ injectivity under additional hypotheses.

The next lemma ensures that under appropriate conditions, given any unitary $u$ in the commutant of $\A$ (relative to some larger unital algebra), and given a unital trivial absorbing extension, the image of $u$ in the Paschke dual of $\A$ lies in the connected component of the identity in the unitary group.

\begin{lem}
Let $\C$ be a unital C*-algebra and
$\A \subseteq \C$ a separable nuclear unital C*-subalgebra.
Say that $u \in \A'$ ($\subseteq \C$) is a unitary. Let $\B$
be a separable simple stable C*-algebra.
Let
$\phi : C^*(\A, u) \rightarrow \Mul(\B)$ be a unital trivial absorbing
extension.

Then there exists a norm-continuous path of unitaries
$\{ v_t \}_{t \in [0,1]}$
in $(\pi \circ \phi(\A))'$ ($\subseteq \C(\B)$) such that
$v_0 = \pi \circ \phi(u)$ and $v_1 = 1$.
\label{lem:July30201812PM}
\end{lem}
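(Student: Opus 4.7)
The plan is to realize $u' := \pi\circ\phi(u)$ as the block-diagonal element $\mathrm{diag}(u', u'^*)$ inside a copy of $\M_2(\A^d_\B)$ sitting inside $\A^d_\B$, after which the classical rotation homotopy $\mathrm{diag}(u', u'^*) \sim_h I_2$ produces the required path.

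The key preliminary is to introduce the *-automorphism $\alpha: C^*(\A, u) \to C^*(\A, u)$ defined by $\alpha|_\A = \mathrm{id}$ and $\alpha(u) = u^*$. This is well defined because $u$ is central in $C^*(\A, u)$ and the defining relations $uu^* = u^*u = 1$ and $[u, \A] = 0$ are symmetric under $u \leftrightarrow u^*$. Since $\alpha$ is an automorphism, $\phi \circ \alpha$ is again a unital trivial absorbing extension of $C^*(\A, u)$, so absorbingness of $\phi$ gives $\phi \oplus (\phi \circ \alpha) \sim \phi$. Repeating the argument of Lemma~\ref{lem:K0PaschkeDualAlgebra}(a) with this equivalence in place of $\iota \oplus \iota \sim \iota$ produces an isometry $\tilde w \in \M_2 \otimes \Mul(\B)$ whose corona image $w := \pi(\tilde w)$ satisfies $w\bigl(\pi\phi(y) \oplus \pi\phi(\alpha(y))\bigr) w^* = \pi\phi(y) \oplus 0$ for every $y \in C^*(\A, u)$, with $w^*w = 1 \oplus 1$ and $ww^* = 1 \oplus 0$. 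Plugging in $y = a \in \A$ (on which $\alpha$ is trivial) shows $w \in \M_2 \otimes \A^d_\B$, and its two nonzero entries $s_1, s_2 \in \A^d_\B$ form a pair of Cuntz isometries: $s_1s_1^* + s_2s_2^* = 1$ and $s_i^* s_j = \delta_{ij}$. Plugging in $y = u$ gives $s_1 u' = u' s_1$ and $s_2 u'^* = u' s_2$, from which one reads $s_1^* u' s_1 = u'$, $s_2^* u' s_2 = u'^*$, and $s_1^* u' s_2 = s_2^* u' s_1 = 0$.

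These four identities say exactly that under the Cuntz *-isomorphism $\Phi: \M_2(\A^d_\B) \xrightarrow{\cong} \A^d_\B$ given by $\Phi(X) = \sum_{i,j} s_i X_{ij} s_j^*$, one has $\Phi^{-1}(u') = \mathrm{diag}(u', u'^*)$. Using the factorization $\mathrm{diag}(u', u'^*) = \mathrm{diag}(u', 1) \cdot \mathrm{diag}(1, u'^*)$ together with the standard rotation $\mathrm{diag}(u', 1) \sim_h \mathrm{diag}(1, u')$ in $U(\M_2(\A^d_\B))$ produces a norm-continuous unitary path from $\mathrm{diag}(u', u'^*)$ to $\mathrm{diag}(1, u') \cdot \mathrm{diag}(1, u'^*) = I_2$. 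Transporting this path through $\Phi$ yields a norm-continuous path $\{v_t\}$ in $U(\A^d_\B)$ from $v_0 = \Phi(\mathrm{diag}(u', u'^*)) = u'$ to $v_1 = \Phi(I_2) = s_1 s_1^* + s_2 s_2^* = 1$.

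The main delicacy is obtaining the intertwiner $s_2$ satisfying $s_2 u'^* = u' s_2$ rather than the commutation $s_2 u' = u' s_2$ that the naive choice $\phi \oplus \phi \sim \phi$ would have produced; it is exactly the twist by $\alpha$ that couples $u'$ with $u'^*$ in the diagonal, which is what the standard rotation homotopy requires.
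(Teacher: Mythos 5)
Your argument hinges on the existence of a *-automorphism $\alpha$ of $C^*(\A,u)$ fixing $\A$ pointwise and sending $u \mapsto u^*$, but this automorphism does \emph{not} exist in general, and that is a fatal gap. The relations $uu^*=u^*u=1$ and $[u,\A]=0$ are indeed conjugation-symmetric, so the universal C*-algebra $\A \otimes_{\max} C(S^1)$ carries the automorphism $\mathrm{id}_\A \otimes (f \mapsto f(\bar z))$. However, $C^*(\A,u)$ is merely a \emph{quotient} of that universal algebra (via $a \otimes f \mapsto a f(u)$), and nothing in the hypotheses forces the kernel to be invariant under the conjugation automorphism. Concretely: take $\A = \mathbb{C}$ and $u = i\cdot 1$. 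Then $C^*(\A,u) = \mathbb{C}$, whose only *-automorphism is the identity, which sends $u=i$ to $i$, not to $u^*=-i$; indeed $\alpha|_\A = \mathrm{id}$ already forces $\alpha(i)=i$, contradicting $\alpha(u)=u^*$. More generally, when $\A=\mathbb{C}$ and $\mathrm{sp}(u)$ is not invariant under complex conjugation (e.g.\ $\mathrm{sp}(u)=\{1,i\}$), $C^*(\A,u) \cong C(\mathrm{sp}(u))$ admits no automorphism with the required effect on $u$. Without $\alpha$ there is no twisted extension $\phi\circ\alpha$, hence no isometry $w$ producing the identity $w(u' \oplus u'^{\,*})w^* = u' \oplus 0$, and the Cuntz-isometry factorization $\Phi^{-1}(u') = \mathrm{diag}(u',u'^{\,*})$ that your rotation homotopy needs never materializes. (Note also that the lemma is still \emph{true} in these cases — e.g.\ a scalar unitary is obviously null-homotopic through scalars — so the gap is in the method, not a counterexample to the statement.)

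The paper's own proof sidesteps this entirely and exploits a different structural fact: since $u$ is central in $C^*(\A,u)$, every irreducible representation $\sigma_n$ of $C^*(\A,u)$ sends $u$ to a scalar $e^{i\theta_n}1$. Taking $\sigma'=\bigoplus_n\sigma_n$ over a dense family of irreducibles (each repeated infinitely often), the induced extension $\sigma = 1_{\Mul(\B)} \otimes \sigma'$ is unital trivial absorbing by Kasparov/Elliott--Kucerovsky, hence unitarily equivalent (modulo $\B$) to $\phi$ via some $w \in \Mul(\B\otimes\K)$. Under $\sigma$ the unitary $u$ becomes a block-diagonal of scalar rotations $\bigoplus_n e^{i\theta_n}1$, which one contracts to $1$ by the explicit path $\bigoplus_n e^{i(1-t)\theta_n}1$; conjugating this path by $w$ and projecting to the corona produces the path in $(\pi\circ\phi(\A))'$. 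This works for \emph{any} $u$ because it never demands conjugation-symmetry of $\mathrm{sp}(u)$ or any automorphism of $C^*(\A,u)$. If you want to salvage the Cuntz-isometry picture, you would still need to explain why $u'$ absorbs $u'^{\,*}$ inside $\A^d_\B$, and you cannot get that from $\iota\oplus\iota\sim\iota$ alone — that only gives $w(u'\oplus u')w^* = u'\oplus 0$, which diagonalizes $u'$ as $\mathrm{diag}(u',u')$ and proves nothing.
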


\begin{proof}
Since $\B$ is stable, we may work with $\B \otimes \K$ instead of $\B$.

By the universal property of the maximal tensor product, $C^*(\A, u)$
is a quotient of $\A \otimes_{\max} C(S^1)$, which is nuclear since
$\A$ and $C(S^1)$ are nuclear.  Hence, $C^*(\A, u)$ is a nuclear
C*-algebra.

Since $C^*(\A, u)$ is separable, let $\{ \sigma_n \}_{n=1}^{\infty}$
be a dense sequence in $\widehat{C^*(\A, u)}$ (the space of irreducible
*-representations of $C^*(\A,u)$) such that every
term in $\{ \sigma_n \}$ reoccurs infinitely many times.
Let $\sigma' : C^*(\A, u) \rightarrow \mathbb{B}(l_2)$ be the unital
essential *-representation given by
$$\sigma' = \bigoplus_{n=1}^{\infty} \sigma_n.$$

Then by \cite{KasparovAbsorbing} Theorem 6 (see also \cite{Blackadar}
Theorem 15.12.4 and \cite{ElliottKucerovsky} Theorem 17),
the map
\[
\sigma: C^*(\A, u) \rightarrow \Mul(\B \otimes \K)
: x \mapsto 1_{\Mul(\B)} \otimes \sigma'(x)
\]
is a unital trivial absorbing extension.
Hence, since $\phi$ is also a unital trivial absorbing extension,
there exists a unitary $w \in \Mul(\B \otimes \K)$ such that
$$\phi(x) - w \sigma(x) w^* \in \B \otimes \K$$
for all $x \in C^*(\A, u)$.

Note that for all $n$, since $\sigma_n$ is an irreducible *-representation
of $C^*(\A, u)$, and since $u$ commutes with every element of
$C^*(\A, u)$, $\sigma_n(u) \in S^1$.
So let $\theta_n \in [0, 2 \pi)$ such that
$\sigma_n(u) = e^{i \theta_n}1$.

Now for all $t \in [0,1]$, let
$$v'_t =_{df} w(1 _{\Mul(\B)} \otimes \bigoplus_{n=1}^{\infty} e^{i(1-t)\theta_n}1 ) w^*.$$
And let
$$v_t =_{df} \pi (v'_t).$$

Then $\{ v'_t \}_{t \in [0,1]}$ is a norm continuous path of unitaries
in $w \sigma(\A)' w^*$ ($\subseteq \Mul(\B \otimes \K)$), and
so $\{ v_t \}_{t \in [0,1]}$ is a norm continuous path of unitaries such that
$$v_0 = \pi \circ \phi(u), \makebox{ } v_1 = 1$$
and
$v_t \in (\pi \circ \phi(\A))'$ for all $t \in [0,1]$.
\end{proof}

Recall that for a unital C*-algebra $\D$, $U(\D)$ denotes the unitary
group of $\D$, and $U(\D)_0$ denotes the elements of $U(\D)$ that are
in the connected component of the identity.

We first focus on the case where the canonical ideal is either
$\K$ or simple purely infinite.  It is well-known that
this is exactly the
case with ``nicest" extension theory, since, among other things,
a BDF--Voiculescu type
absorption
result holds.  In fact, in this context, under a nuclearity
hypothesis,
Kasparov's $KK^1$ classifies all essential extensions.

The next result generalizes \cite{Paschke} Lemma 3(2).

\begin{lem}
Let $\A$ be a unital separable nuclear C*-algebra, and $\B$ a separable
stable simple C*-algebra such that either $\B \cong \K$ or $\B$
is purely infinite.

Then the map
$$U(\A^d_{\B})/ U(\A^d_{\B})_0 \rightarrow U(\M_2 \otimes \A^d_{\B})/
U(\M_2 \otimes \A^d_{\B})_0$$
given by
$$[u] \rightarrow [u \oplus 1]$$
is injective.
\label{lem:July30201812:50PM}
\end{lem}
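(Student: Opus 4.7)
Let $u \in U(\A^d_\B)$ with $u \oplus 1 \in U(\M_2 \otimes \A^d_\B)_0$; the goal is $u \in U(\A^d_\B)_0$.

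First I would extract from the proof of Lemma~\ref{lem:K0PaschkeDualAlgebra}(a) the isometry $v \in \M_2 \otimes \A^d_\B$ with $v^*v = 1 \oplus 1$ and $vv^* = 1 \oplus 0$. Writing $v = \bigl(\begin{smallmatrix} s & t \\ 0 & 0 \end{smallmatrix}\bigr)$ gives Cuntz isometries $s, t \in \A^d_\B$ satisfying $s^*s = t^*t = 1$, $s^*t = 0$, and $ss^* + tt^* = 1$. The compression $\Psi(x) := vxv^*$ is then a unital $*$-isomorphism from $\M_2 \otimes \A^d_\B$ onto the corner $(1 \oplus 0)(\M_2 \otimes \A^d_\B)(1 \oplus 0) \cong \A^d_\B$, which sends $u \oplus 1 \mapsto sus^* + tt^*$ and $1 \oplus 1 \mapsto 1$. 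Applying $\Psi$ to the homotopy furnished by the hypothesis reduces the problem to showing that $u$ is homotopic to $sus^* + tt^*$ in $U(\A^d_\B)$.

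For this remaining step, the plan is to apply Lemma~\ref{lem:July30201812PM} with ambient algebra $\C = \C(\B)$, with $\A$ identified via the given absorbing extension $\phi$, and with commuting unitary $u \in \A^d_\B = \A' \cap \C(\B)$. One needs a unital trivial absorbing extension $\hat\phi : C^*(\A, u) \to \Mul(\B)$ extending $\phi$, and under the hypothesis $\B \cong \K$ or $\B$ simple purely infinite, the Elliott--Kucerovsky theorem guarantees that any unital trivial extension of the separable nuclear C*-algebra $C^*(\A, u)$ is absorbing. The role of the hypothesis $u \oplus 1 \in U(\M_2 \otimes \A^d_\B)_0$ is precisely to ensure that such an extending $\hat\phi$ exists: namely, that $u$ lifts to a unitary in $\Mul(\B)$ commuting with $\phi(\A)$. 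Once this lift is in hand, Lemma~\ref{lem:July30201812PM} yields a continuous path in $U(\A^d_\B)$ from $\pi \circ \hat\phi(u) = u$ to $1$, completing the proof.

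The chief obstacle is constructing the lift, which cannot exist for an arbitrary unitary in $U(\A^d_\B)$ (otherwise every such unitary would be homotopic to $1$ and $K_1(\A^d_\B)$ would vanish). Concretely, writing $u \oplus 1 = \prod_j e^{ih_j}$ with self-adjoint $h_j \in \M_2 \otimes \A^d_\B$, each $h_j$ lifts to a self-adjoint $\tilde h_j \in \M_2 \otimes \Mul(\B)$, and the commutation of $h_j$ with $\A \otimes 1$ can be propagated to its lift, up to a perturbation in $\M_2 \otimes \B$, by a Weyl--von Neumann-style argument made available by the simplicity and stability assumptions on $\B$. Polishing the product $\prod_j e^{i \tilde h_j}$, which is a unitary lift of $u \oplus 1$ commuting with $\phi(\A) \otimes 1$, and using that its image in $\C(\M_2 \otimes \B)$ is diagonal, one extracts the desired unitary lift of $u$ commuting with $\phi(\A)$.
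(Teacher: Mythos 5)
Your first-paragraph reduction (compress by the isometry $v$ to reduce to $u \sim_h sus^* + tt^*$) is harmless, but the route you actually pursue has a genuine gap at its core. You claim that the hypothesis $u \oplus 1 \in U(\M_2 \otimes \A^d_{\B})_0$ ensures that $u$ lifts to a unitary in $\Mul(\B)$ commuting \emph{exactly} with $\phi(\A)$, and hence that there is a trivial absorbing extension $\hat\phi$ of $C^*(\A,u)$ with $\pi \circ \hat\phi = \mathrm{id}$ (so that Lemma~\ref{lem:July30201812PM} applies to $u$ itself). Neither step is justified. Writing $u \oplus 1 = \prod_j e^{ih_j}$ and lifting each $h_j$ to a self-adjoint $\tilde h_j$ only gives commutation with $\phi(\A)$ \emph{modulo} $\B$ --- which every lift has automatically --- and upgrading this to exact commutation is precisely the hard commutant-lifting problem (surjectivity of $\phi(\A)' \cap \Mul(\B) \to (\pi\circ\phi(\A))' \cap \C(\B)$), which your "Weyl--von Neumann-style" gesture does not address and which is not available in this generality. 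Moreover, even granting a commuting unitary lift $\tilde u$, the prescription $\hat\phi|_{\A} = \phi$, $\hat\phi(u) = \tilde u$ need not define a $*$-homomorphism on $C^*(\A,u)$: relations holding in the corona among $u$ and $\A$ need not hold upstairs between $\tilde u$ and $\phi(\A)$ (the quotient map only gives norm inequalities in the wrong direction). In effect your argument requires the inclusion $C^*(\A,u) \hookrightarrow \C(\B)$ to be a \emph{trivial} extension, which is much stronger than anything the hypotheses provide. Your final "extraction" of a unitary lift of $u$ from a unitary lift of $u \oplus 1$ is also glossed: the $(1,1)$ entry is only a unitary modulo $\B$, and correcting it is exactly the kind of index-obstructed move that needs the very conclusion you are trying to prove.

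The paper's proof avoids all lifting. It takes \emph{some} unital trivial absorbing extension $\sigma$ of $C^*(\pi\circ\phi(\A), u)$ (existence is automatic; it is not required to lift the inclusion), conjugates so that only $\pi\circ\sigma(x) = x$ for $x \in \pi\circ\phi(\A)$, and gets $\pi\circ\sigma(u) \sim_h 1$ in $\A^d_{\B}$ from Lemma~\ref{lem:July30201812PM}. The hypothesis that $\B \cong \K$ or $\B$ is simple purely infinite is used, via \cite{ElliottKucerovsky} Theorem 17, only to know that the inclusion $\iota : C^*(\pi\circ\phi(\A),u) \to \C(\B)$ is \emph{absorbing} (not trivial), so $\iota \oplus (\pi\circ\sigma) \sim \iota$; the implementing isometry $w$ lies in $\M_2 \otimes \A^d_{\B}$ and satisfies $w(u \oplus \pi\circ\sigma(u))w^* = u \oplus 0$. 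The hypothesis $u \oplus 1 \sim_h 1 \oplus 1$ is then used only at the very end, to combine homotopies ($u \oplus \pi\circ\sigma(u) \sim_h u \oplus 1 \sim_h 1 \oplus 1$) and compress by $w$ to conclude $u \sim_h 1$ in $\A^d_{\B}$. If you want to salvage your proposal, you should replace the lifting step by this absorption-plus-corner argument; as written, the proposal does not prove the lemma.
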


\begin{proof}

Let $\phi : \A \rightarrow \Mul(\B)$ be a unital trivial absorbing
extension.
We may identify $\A^d_{\B} = (\pi \circ \phi(\A))'$.

Let $u \in \A^d_{\B}$ be a unitary such that
$$u \oplus 1 \sim_h 1 \oplus 1$$
in $\M_2 \otimes \A^d_{\B}$.

Let $\sigma : C^*(\pi \circ \phi(\A), u) \rightarrow \Mul(\B)$
be a unital trivial absorbing extension.

Since $\sigma |_{\pi \circ \phi(\A)}$ is a unital trivial absorbing
extension, conjugating $\sigma$ by an appropriate unitary if necessary,
we may assume that
$\pi \circ \sigma(x) = x$ for all $x \in \pi \circ \phi(\A)$.
(After all, by \cite{ElliottKucerovsky},
the map $\pi \circ \phi(\A) \rightarrow \Mul(\B) :
\pi \circ \phi(a) \rightarrow \phi(a)$ is also a unital trivial
absorbing extension.)

By Lemma \ref{lem:July30201812PM}, we have that
\begin{equation}
\pi \circ \sigma(u) \sim_h 1
\label{equ:July30201810AM}
\end{equation}
in $(\pi \circ \sigma(\pi \circ \phi(\A))' = (\pi \circ \phi(\A))'
= \A^d_{\B}$.

Since either $\B \cong \K$ or $\B$ is simple purely infinite,
it follows, by \cite{ElliottKucerovsky} Theorem 17, that
the inclusion map $\iota : C^*(\pi \circ \phi(\A), u) \rightarrow \C(\B)$
is
a unital trivial absorbing extension.  Hence,
$$\iota \oplus (\pi \circ \sigma) \sim \iota.$$

Hence, there exists an isometry $W \in \M_2 \otimes \Mul(\B)$
such that $W^*W = 1 \oplus 1 = 1_{\M_2 \otimes \Mul(\B)}$,
$W W^* = 1 \oplus 0$ and if $w =_{df} \pi(W)$, then
$$w (\iota \oplus (\pi \circ \sigma)) w^* = \iota \oplus 0.$$

As a consequence, we have that
\begin{equation}
w(u \oplus (\pi \circ \sigma(u)))w^* = u \oplus 0,
\label{equ:July20201812:20PM}
\end{equation}

and

\begin{equation}
w(x \oplus x) w^* = x \oplus 0
\label{equ:July30201812:45PM}
\end{equation}
for all $x \in \pi \circ \sigma(\A)$.

Note that by (\ref{equ:July30201812:45PM}),
for all $x \in \pi \circ \sigma(\A)$,
\begin{eqnarray*}
w(x \oplus x) & = & (x \oplus 0) w\\
              & = & (x \oplus x) w \makebox{  (since } w w^* = 1 \oplus 0
\makebox{ )}\\
\end{eqnarray*}

Hence,
$$w \in \M_2 \otimes \A^d_{\B}.$$

Now by (\ref{equ:July30201810AM}),
$$u \oplus (\pi \circ \sigma(u)) \sim_h u \oplus 1$$
in $\M_2 \otimes \A^d_{\B}$.
Also, by the hypothesis on $u$,
$$u \oplus 1 \sim_h 1 \oplus 1$$
in $\M_2 \otimes \A^d_{\B}$.
So
$$u \oplus (\pi \circ \sigma(u)) \sim_h 1 \oplus 1$$
in $\M_2 \otimes \A^d_{\B}$.
Conjugating the continuous path of unitaries by $w$ and
applying (\ref{equ:July20201812:20PM}), we have that
$$u \sim_h 1$$
in $\A^d_{\B}$.

\end{proof}

\begin{thm}
Let $\A$ be a unital separable nuclear C*-algebra and $\B$ a separable
simple stable C*-algebra such that either $\B \cong \K$ or $\B$ is
purely infinite.

Then $\A^d_{\B}$ is $K_1$-injective.
Moreover, for all $n \geq 1$, the map
\[
U(\M_n \otimes \A^d_{\B})/U(\M_n \otimes \A^d_{\B})_0
\rightarrow
U(\M_{2n} \otimes \A^d_{\B})/U(\M_{2n} \otimes \A^d_{\B})_0
\]
given by
\[
[u] \mapsto [u \oplus 1]
\]
is injective.
\label{thm:K1injective}
\end{thm}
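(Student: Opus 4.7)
The plan is to first establish the doubling injectivity for all $n \geq 1$ by reducing to the case $n=1$ already handled by Lemma~\ref{lem:July30201812:50PM}, and then to deduce $K_1$-injectivity via a backward induction along the scale of matrix sizes that are powers of two.

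For the doubling statement, the key observation is that $\M_n \otimes \A^d_{\B}$ is itself a Paschke dual algebra of $\A$, but relative to $\B \otimes \M_n$ rather than $\B$. Namely, if $\phi : \A \rightarrow \Mul(\B)$ is a unital trivial absorbing extension with $\A^d_{\B} = (\pi \circ \phi(\A))'$, then $\tilde{\phi} = \phi \otimes 1_n : \A \rightarrow \Mul(\B) \otimes \M_n = \Mul(\B \otimes \M_n)$ is still unital and trivial, and $(\pi \circ \tilde{\phi}(\A))' = \M_n \otimes (\pi \circ \phi(\A))' = \M_n \otimes \A^d_{\B}$. One then checks that $\tilde{\phi}$ is absorbing; this is the one place where we need more than formal manipulation, and it follows from the Elliott--Kucerovsky characterization of absorbing extensions via purely large ideals, a property preserved under tensoring with a finite-dimensional nuclear algebra. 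Since $\B$ is stable, $\B \otimes \M_n \cong \B$, so the pair $(\A, \B \otimes \M_n)$ again satisfies the hypotheses of Lemma~\ref{lem:July30201812:50PM}: $\B \otimes \M_n$ is separable, simple, stable, and either isomorphic to $\K$ or purely infinite. Applying that lemma to this pair gives the injectivity of $[u] \mapsto [u \oplus 1]$ from $U(\A^d_{\B \otimes \M_n})/U_0$ to $U(\M_2 \otimes \A^d_{\B \otimes \M_n})/U_0$, which, under the natural identification $\M_{2n} \cong \M_2 \otimes \M_n$, is exactly the claimed map from $U(\M_n \otimes \A^d_{\B})/U_0$ to $U(\M_{2n} \otimes \A^d_{\B})/U_0$.

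For $K_1$-injectivity, let $u \in U(\A^d_{\B})$ satisfy $[u] = 0$ in $K_1(\A^d_{\B})$. By the standard description of $K_1$ as the direct limit of $U(\M_m \otimes \A^d_{\B})/U_0$, there exists $m$ with $u \oplus 1_{m-1} \sim_h 1_m$ in $\M_m \otimes \A^d_{\B}$. Stabilizing further if needed, I may assume $m = 2^k$. Decomposing $u \oplus 1_{2^k - 1} = (u \oplus 1_{2^{k-1} - 1}) \oplus 1_{2^{k-1}}$ inside $\M_2 \otimes (\M_{2^{k-1}} \otimes \A^d_{\B})$ and applying the doubling injectivity just established at $n = 2^{k-1}$, I deduce $[u \oplus 1_{2^{k-1} - 1}] = 0$ in $U(\M_{2^{k-1}} \otimes \A^d_{\B})/U_0$. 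Iterating this $k$ times yields $u \sim_h 1$ in $\A^d_{\B}$, establishing the desired injectivity.

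The main obstacle, as indicated, is the verification that $\phi \otimes 1_n$ is absorbing relative to $\B \otimes \M_n$; once that is in hand, everything else is a clean bootstrap from the already proven $n=1$ case together with a short backward induction.
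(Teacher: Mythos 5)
Your argument is correct, but it reaches the matrix-level statement by a genuinely different route than the paper. The paper's proof is a reduction to $n=1$ via Lemma~\ref{lem:K0PaschkeDualAlgebra}: since $1\oplus 1\sim 1$ in $\M_2\otimes\A^d_{\B}$, one gets $\M_n\otimes\A^d_{\B}\cong\A^d_{\B}$ for every $n$, and the doubling map at level $n$ is thereby identified with the $n=1$ map already handled in Lemma~\ref{lem:July30201812:50PM}. You instead realize $\M_n\otimes\A^d_{\B}$ as the Paschke dual of $\A$ relative to $\B\otimes\M_n$, via $\phi\otimes 1_n$ and the commutant computation $(\pi\circ\phi(\A)\otimes 1_n)'=(\pi\circ\phi(\A))'\otimes\M_n$ inside $\C(\B)\otimes\M_n\cong\C(\B\otimes\M_n)$, and then re-apply Lemma~\ref{lem:July30201812:50PM} to the pair $(\A,\B\otimes\M_n)$, whose hypotheses are still satisfied. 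Your route trades the paper's appeal to proper infiniteness of the unit (and the implicit compatibility of the isomorphism $\M_n\otimes\A^d_{\B}\cong\A^d_{\B}$ with the maps $[u]\mapsto[u\oplus 1]$) for a check that the amplified extension is still absorbing; your backward induction over powers of two also makes explicit the passage from doubling injectivity to $K_1$-injectivity, which the paper's one-line proof leaves implicit.

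Concerning the step you single out as the main obstacle: you do not need, and should not rest the proof on, an unproved general claim that pure largeness is preserved under tensoring with a finite-dimensional algebra. Under the standing hypotheses, $\B\otimes\M_n$ is again separable, simple, stable, and either isomorphic to $\K$ or purely infinite, and $\phi\otimes 1_n$ is a unital, trivial, essential extension of the separable nuclear unital algebra $\A$ (essential because $\phi(a)\otimes 1_n\in\B\otimes\M_n$ forces $\phi(a)\in\B$, and $\pi\circ\phi$ is injective). Hence $\phi\otimes 1_n$ is unitally absorbing directly by Theorem 17 of \cite{ElliottKucerovsky}, exactly as that result is invoked inside the proof of Lemma~\ref{lem:July30201812:50PM}. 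With that substitution your argument is complete; note, however, that this is also the point where your approach genuinely uses the restriction that $\B\cong\K$ or $\B$ is purely infinite, whereas the paper's reduction via Lemma~\ref{lem:K0PaschkeDualAlgebra} works verbatim in the setting of Theorem~\ref{thm:K1injectivePart2} as well.
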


\begin{proof}

By Lemma \ref{lem:K0PaschkeDualAlgebra},
we have that the unit of the Paschke algebra $\A^d_{\B}$ satisfies
$1 \oplus 1 \sim 1$.  Hence, for all $n$,
$\A^d_{\B} \cong \M_n \otimes \A^d_{\B}$.
Thus, the result follows from
Lemma \ref{lem:July30201812:50PM}.
\end{proof}

We now move towards understanding $K_1$ injectivity of the Paschke
dual algebra, when the
canonical ideal is no longer elementary nor simple purely infinite.
Outside of these small number of cases, our knowledge of extension theory
is highly incomplete and the questions that arise are much more
challenging.

Let $\D$ be a C*-algebra and $\C \subseteq \D$ a C*-subalgebra.
We say that $\C$ is \emph{strongly full} in $\D$ if every nonzero
element of $\C$ is full in $\D$.  For every nonzero $x \in \D$,
we say that $x$ is \emph{strongly full} in $\D$ if $C^*(x)$ is a strongly
full C*-subalgebra of $\D$.

\begin{lem}
Let $\D$ be a unital C*-algebra and $\A \subseteq \D$ a unital simple
C*-subalgebra.
Suppose that $u \in \A'$ is a strongly full unitary element of $\D$.

Then $C^*(u, \A)$ is strongly full in $\D$.
\label{lem:StronglyFull}
\end{lem}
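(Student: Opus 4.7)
The strategy is to use that $u$ commutes with $\A$ to realize $C^*(u,\A)$ as an algebra of continuous $\A$-valued functions on some nonempty closed $F\subseteq\mathbb{T}$, use simplicity of $\A$ to compute the closed two-sided ideal generated by an arbitrary nonzero element, and pull out a nonzero element of $C^*(u)$ from that ideal so that the strong fullness hypothesis on $u$ finishes the job.

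Concretely, since $u\in\A'$ and $C(\sigma(u))$ is nuclear, the universal property of the tensor product yields a unital surjection
\[
\Phi\colon C(\sigma(u))\otimes\A \twoheadrightarrow C^*(u,\A)\subseteq\D,\qquad f\otimes a\mapsto f(u)a.
\]
I would then invoke the classical fact that, for $\A$ simple unital and $X$ a locally compact Hausdorff space, the closed two-sided ideals of $C_0(X,\A)=C_0(X)\otimes\A$ are precisely the subalgebras $C_0(U,\A)$ for $U\subseteq X$ open. Applied to $\ker\Phi$, this produces a nonempty closed $F\subseteq\sigma(u)$ and an isomorphism $C^*(u,\A)\cong C(F,\A)$, under which $C^*(u)$ sits as $C(F)\otimes 1_\A$.

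Given nonzero $y\in C^*(u,\A)$, view it as a continuous $\A$-valued function on $F$ and let $W=\{z\in F:y(z)\neq 0\}$, which is open and nonempty. Re-applying the ideal-structure theorem inside $C(F,\A)$ identifies the closed two-sided ideal generated by $y$ with $C_0(W,\A)$. Any nonzero $f\in C_0(W)$ then gives a nonzero element $f(u)\in C^*(u)$ lying in $C_0(W,\A)$, hence in the closed two-sided ideal of $\D$ generated by $y$. Strong fullness of $u$ in $\D$ yields $\overline{\D f(u)\D}=\D$, so $\overline{\D y\D}=\D$ and $y$ is full in $\D$. Since $y$ was arbitrary nonzero, $C^*(u,\A)$ is strongly full in $\D$.

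The main point requiring care is the ideal-structure result for $C_0(X)\otimes\A$ when $\A$ is simple. The inclusion of each $C_0(U,\A)$ as an ideal is formal, but the reverse direction uses simplicity essentially: nonvanishing of some $f\in J$ at a point $x_0$ is upgraded, via $1_\A\in\overline{\A f(x_0)\A}$ together with continuity, to an element of $J$ that is invertible near $x_0$, after which a partition-of-unity argument produces arbitrary elements of $C_0(U,\A)$ inside $J$. This is standard, but is the technical heart of the argument; everything else is bookkeeping.
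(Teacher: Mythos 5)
Your argument is correct but takes a genuinely different, more structural route than the paper's.

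The paper proceeds elementarily: given a nonzero positive $c\in C^*(u,\A)$, it produces a continuous $g:S^1\to[0,1]$ and $a\in\A_+$ with $0\le g(u)a\le c$ and $g(u)a\neq 0$, writes $\sum_j x_j a x_j^*=1$ using simplicity of $\A$, and then uses that $g(u)$ commutes with the $x_j$ to get $\sum_j x_j\,g(u)a\,x_j^*=g(u)$; fullness of $g(u)$ in $\D$ (strong fullness of $u$) then propagates down to $c$. By contrast, you invoke the classical ideal-structure theorem for $C_0(X)\otimes\A$ with $\A$ simple, identify $C^*(u,\A)\cong C(F,\A)$ (in fact $F=\sigma(u)$ and $\Phi$ is injective, since $\Phi$ is already injective on $C(\sigma(u))\otimes 1$ by the functional calculus, so $\ker\Phi\cap(C(\sigma(u))\otimes 1)=0$ forces $\ker\Phi=0$), compute the ideal generated by a nonzero $y$ as $C_0(W,\A)$, and pull a nonzero $f(u)\in C^*(u)$ out of that ideal. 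Both proofs ultimately rest on understanding the structure of $C^*(u,\A)$; yours makes this explicit and yields the stronger conclusion $C^*(u,\A)\cong C(\sigma(u))\otimes\A$, at the cost of importing the ideal-structure theorem. The paper's proof is shorter and avoids that machinery, using instead the purely algebraic identity from simplicity. One small point to make explicit in your write-up: the element ``$f(u)$'' for $f\in C_0(W)\subseteq C(F)$ is really $\Phi$ applied to any extension of $f$ to $C(\sigma(u))$ tensored with $1$ (or just note $F=\sigma(u)$ as above); as stated the functional-calculus notation slightly glosses over the passage from $F$ to $\sigma(u)$, though this is harmless.
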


\begin{proof}
It suffices to prove that every nonzero positive element of
$C^*(u, \A)$ is  full in $\D$.

Let $c \in C^*(u, \A)$ be a nonzero positive element.
Hence, there exists a continuous function
$g : S^1 \rightarrow [0,1]$, and an element $a \in \A_+$ such that
$g(u) a \neq 0$ and $0 \leq g(u) a \leq c$.

Since $\A$ is unital and simple, let $x_1, x_2, ..., x_n \in \A$
be such that
$$\sum_{j=1}^n x_j a x_j^* = 1.$$

Hence,
$$\sum_{n=1}^n x_j g(u) a x_j^* = \sum_{n=1}^n g(u) x_j a x_j^* = g(u).$$

Since $g(u)$ is a full element of $\D$, it follows that
$g(u)a$ is a full element of $\D$.  Hence, $c$ is a full element of $\D$.
Since $c$ was arbitrary, $C^*(u, \A)$ is a strongly full
C*-subalgebra of $\D$.
\end{proof}

Recall that a separable stable C*-algebra $\B$ is said to have the
\emph{corona factorization property} (CFP) if every norm-full projection
in $\Mul(\B)$ is Murray--von Neumann equivalent to $1_{\Mul(\B)}$
(\cite{KucerovskyNgAUE}).

Many C*-algebras have the CFP.  For example, all separable simple
C*-algebras that are either purely infinite or have strict comparison
of positive elements, including all simple C*-algebras classified in
the Elliott program, have the CFP.
In fact, it is quite difficult to construct a simple separable C*-algebra
without CFP.

Recall also, that
a map $\phi : \A \rightarrow \C$ between C*-algebras is said to be
\emph{norm full} or \emph{full}
if for every $a \in \A - \{ 0 \}$, $\phi(a)$ is a full
element of $\C$, i.e., $Ideal(\phi(a)) = \C$.

We say that a *-homomorphism $\phi : \A \rightarrow \Mul(\B)$ \emph{absorbs
0} if $\pi \circ \phi \oplus 0 \sim \pi \circ \phi$.

In \cite{KucerovskyNgAUE}, the following result was proven:
\begin{thm}
Let $\B$ be a separable stable C*-algebra with the CFP,
$\A$ a separable C*-algebra, and $\phi : \A \rightarrow \Mul(\B)/\B$
an essential extension such that either $\phi$ is unital or
$\phi$ absorbs $0$.

Then $\phi$ is nuclearly absorbing if and only if $\phi$ is norm-full.

As a consequence, if, in addition, $\A$ is nuclear, then
$\phi$ is absorbing if and only if $\phi$ is norm-full.

In the above, when $\phi(1) = 1$ and we say that $\phi$ is absorbing, we mean that
$\phi$ is absorbing in the unital sense.
\label{thm:KucerovskyNgAbsorption}
\end{thm}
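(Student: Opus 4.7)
The plan is to reduce to the Elliott--Kucerovsky \emph{purely large} criterion from \cite{ElliottKucerovsky}, which characterizes nuclearly absorbing essential extensions: an essential extension $\phi : \A \to \C(\B)$ is nuclearly absorbing precisely when, for every positive $a \in \A$ and every positive lift $T_a \in \Mul(\B)$ of $\phi(a)$, the hereditary C*-subalgebra $\overline{T_a \B T_a}$ contains a stable C*-subalgebra which is full in $\B$. The unital (or ``absorbs $0$'') hypothesis ensures that the standard direct-sum and cut-down arguments in the criterion apply, and also lets us pass between $\phi$ and $\phi \oplus 0$ freely without changing any ideal-theoretic content.

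For the forward direction (nuclearly absorbing $\Rightarrow$ norm-full), I would construct an explicit nuclear trivial extension $\sigma : \A \to \Mul(\B)$ whose image in the corona is manifestly norm-full. Using stability of $\B$, let $\sigma$ be an infinite amplification of a faithful nondegenerate representation of $\A$, assembled along a system of matrix units in $\K$; then $\pi \circ \sigma$ sends each nonzero $a$ to a norm-full element of $\C(\B)$. Since $\phi$ absorbs $\sigma$ (up to unitization/``absorbing $0$''), we have $\phi \sim \phi \oplus (\pi \circ \sigma)$, and the ideal generated by $\phi(a)$ in $\C(\B)$ must therefore contain the ideal generated by $\pi \circ \sigma(a)$, which is all of $\C(\B)$. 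Hence $\phi$ is norm-full.

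The reverse direction (norm-full $\Rightarrow$ nuclearly absorbing) is the heart of the matter, and this is where the CFP enters. Given nonzero positive $a \in \A$ with $\phi(a)$ full in $\C(\B)$ and a positive lift $T_a \in \Mul(\B)$, one must realize a stable, full C*-subalgebra of $\B$ \emph{inside} $\overline{T_a \B T_a}$. I would use functional calculus on $T_a$ together with a dense increasing sequence of spectral projections to produce a countable family of pairwise orthogonal positive elements in the hereditary subalgebra; the norm-fullness of $\phi(a)$ forces each of these to dominate (modulo $\B$) a nonzero projection, and the CFP allows us to insist that each such projection is Murray--von Neumann equivalent to a prescribed full projection $p \in \B$. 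Splicing the resulting matrix unit system produces the required stable full subalgebra $\K \otimes p\B p \subseteq \overline{T_a \B T_a}$. The last clause of the theorem follows immediately because for nuclear $\A$ the classes of nuclear and arbitrary trivial extensions coincide. The main obstacle is this final construction: translating norm-fullness \emph{in the corona} into the stable full hereditary subalgebra living \emph{in the multiplier}, and it is precisely the CFP that makes this translation possible without further structural hypotheses on $\B$.
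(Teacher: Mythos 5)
The paper itself offers no proof of this statement: it is quoted verbatim as a result established in \cite{KucerovskyNgAUE}, so there is no in-paper argument to compare against; what follows is an assessment of your sketch on its own merits. Your overall framework — reduce to the Elliott--Kucerovsky purely-large characterization of nuclearly absorbing essential extensions, and treat the ``unital or absorbs $0$'' hypothesis as the bridge that lets the (unital) purely-large criterion apply — is indeed the right organizing idea, and your forward direction (exhibit an explicit norm-full weakly nuclear trivial extension $\sigma$, absorb it, and read off fullness of $\phi(a)$ from the ideal of $\phi(a)\oplus\pi\sigma(a)$) is sound in outline, modulo a routine check that the infinite amplification is weakly nuclear and that $\pi\sigma(a)$ really is full.

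The reverse direction, which you rightly identify as the crux, contains a genuine gap: the CFP is misapplied. As defined here (and in \cite{KucerovskyNgAUE}), CFP concerns projections in $\Mul(\B)$ being Murray--von Neumann equivalent to $1_{\Mul(\B)}$; it says nothing directly about projections inside $\B$, so the step ``CFP allows us to insist that each such projection is Murray--von Neumann equivalent to a prescribed full projection $p\in\B$'' is not what CFP delivers — and indeed a separable stable $\B$ in the scope of the theorem need not contain any full projection. Nor do positive elements of a general C*-algebra dominate nonzero projections, so ``norm-fullness \dots\ forces each of these to dominate (modulo $\B$) a nonzero projection'' has no justification; and even granted a sequence of mutually equivalent projections, you do not get a compatible system of matrix units inside the fixed hereditary subalgebra $\overline{T_a\B T_a}$ for free. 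The actual mechanism in \cite{KucerovskyNgAUE} is different: one first shows that norm-fullness of $\phi(a)$ makes $\overline{T_a\B T_a}$ a full $\sigma$-unital hereditary C*-subalgebra of $\B$, and then invokes the characterization that a stable $\sigma$-unital C*-algebra has the CFP precisely when every full $\sigma$-unital hereditary C*-subalgebra is stable; thus $\overline{T_a\B T_a}$ is itself the required stable full subalgebra, and the purely-large condition holds with no further construction. Your sketch would need to be rebuilt around that hereditary-subalgebra/stability characterization of CFP rather than around projections and matrix units.
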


Let $\B$ be a nonunital separable stable simple C*-algebra
with a nonzero projection $e \in \B$.  We let $T_e(\B)$ denote
the set of all tracial states on $e \B e$.  It is well known
that $T_e(\B)$, with the weak* topology, is
a Choquet simplex.  Moreover, it is also well known that
$\B \cong e \B e \otimes \K$ and that every $\tau \in T_e(\B)$
extends to a trace (which can take the value $\infty$) on
$\Mul(\B)_+$.  If $e' \in \B$ is another nonzero projection,
then $T_e(\B)$ and $T_{e'}(\B)$ are homeomorphic, and
$T_e(\B)$ has finitely many extreme points if and only if
$T_{e'}(\B)$ has finitely many extreme points.  Our results will
be independent of the choice of nonzero projection in $\B$, and
hence, we will write $T(\B)$ to mean $T_e(\B)$ for some $e
\in Proj(\B) - \{ 0 \}$.

Recall that for all $a \in \B_+$ and for all $\tau \in T(\B)$,
$$d_{\tau}(a) =_{df} \lim_{n \rightarrow \infty} \tau(a^{1/n}).$$

Recall that $\B$ is said to have \emph{strict comparison} for positive
elements if for all $a, b \in \B_+$,

$$d_{\tau}(a) < d_{\tau}(b) \makebox{  whenever  }
d_{\tau}(b) < \infty \makebox{ } \forall \tau \in T(\B)
\makebox{  if and only if  } a \preceq b.$$

In the above, $a \preceq b$ means that there exists
$\{ x_k \}$ in $\B$ such that $x_k b x_k^* \rightarrow a.$

In the next proof, we use a key technical lemma, Lemma
\ref{lem:TechnicalLemma}, whose proof we provide in the
later Section \ref{sec:TechnicalLemma}.

\begin{lem}
Let $\A$ be a unital separable simple nuclear C*-algebra, and $\B$ a separable
stable simple C*-algebra with a nonzero projection,
strict comparison of positive elements
and for which $T(\B)$ has finitely many extreme points.

Suppose that there exists a *-embedding $\A \hookrightarrow \B$.

Then the map
$$U(\A^d_{\B})/U(\A^d_{\B})_0  \rightarrow U(\M_2 \otimes A^d_{\B})/
U(\M_2 \otimes\A^d_{\B})_0$$
given by
$$[u] \mapsto [u \oplus 1]$$
is injective.
\label{lem:Oct20201812PM}
\end{lem}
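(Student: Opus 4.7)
The plan is to mimic the proof of Lemma~\ref{lem:July30201812:50PM}, where the only essential use of the hypothesis ``$\B \cong \K$ or simple purely infinite'' was to invoke \cite{ElliottKucerovsky} Theorem~17, giving that the inclusion $\iota : C^*(\pi \circ \phi(\A), u) \hookrightarrow \C(\B)$ is a unital trivial absorbing extension. In the present setting I would replace that appeal with Theorem~\ref{thm:KucerovskyNgAbsorption}, which works for any separable stable C*-algebra $\B$ with the corona factorization property. Since $\B$ is separable, simple, stable, has a nonzero projection, and has strict comparison with $T(\B)$ a finite-dimensional simplex, $\B$ has CFP (this is standard). Nuclearity of $C^*(\pi \circ \phi(\A), u)$ is argued exactly as in the proof of Lemma~\ref{lem:July30201812PM}, via the nuclear quotient of $\A \otimes_{\max} C(S^1)$.

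The substantive obstacle is the \emph{norm-fullness} hypothesis needed for Theorem~\ref{thm:KucerovskyNgAbsorption}. Since $\phi$ is absorbing and $\A$ is nuclear, $\pi \circ \phi(\A)$ is already strongly full in $\C(\B)$, but one must also know that $u$ contributes no non-full elements. This is where I would invoke the technical Lemma~\ref{lem:TechnicalLemma} from Section~\ref{sec:TechnicalLemma}: given the hypothesis $u \oplus 1 \sim_h 1 \oplus 1$ in $\M_2 \otimes \A^d_{\B}$, it should allow me to perturb $u$ within its class in $U(\A^d_{\B})/U(\A^d_{\B})_0$ to a unitary that is strongly full in $\C(\B)$. (The existence of such perturbations is precisely where the embedding $\A \hookrightarrow \B$, strict comparison, and finitely many extreme tracial states all enter — they furnish enough ``room'' in $\C(\B)$ to rotate $u$ to a strongly full unitary.) Since $\A$ is simple, Lemma~\ref{lem:StronglyFull} then yields that $C^*(\pi \circ \phi(\A), u)$ is strongly full in $\C(\B)$, so the inclusion $\iota$ is norm-full and therefore absorbing by Theorem~\ref{thm:KucerovskyNgAbsorption}.

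With $\iota$ now a unital trivial absorbing extension, the remainder of the argument transcribes the proof of Lemma~\ref{lem:July30201812:50PM} essentially verbatim. Namely, I would fix a unital trivial absorbing extension $\sigma : C^*(\pi \circ \phi(\A), u) \to \Mul(\B)$, conjugate so that $\pi \circ \sigma$ restricts to the identity on $\pi \circ \phi(\A)$, and apply Lemma~\ref{lem:July30201812PM} (using simplicity of $\B$) to get $\pi \circ \sigma(u) \sim_h 1$ in $\A^d_{\B}$. Absorption gives $\iota \oplus (\pi \circ \sigma) \sim \iota$, hence an isometry $W \in \M_2 \otimes \Mul(\B)$ with $w := \pi(W) \in \M_2 \otimes \A^d_{\B}$ conjugating $u \oplus \pi \circ \sigma(u)$ to $u \oplus 0$. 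Chaining $u \oplus \pi \circ \sigma(u) \sim_h u \oplus 1 \sim_h 1 \oplus 1$ and conjugating by $w$ then collapses to $u \sim_h 1$ in $\A^d_{\B}$.

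The hard part is precisely the fullness step — arranging that $u$ (or a representative of its class) is strongly full — and this is offloaded to Lemma~\ref{lem:TechnicalLemma}. Once that is in hand, the abstract framework of Elliott--Kucerovsky and Kucerovsky--Ng absorption carries the argument through in exactly the same shape as in the purely infinite / elementary case.
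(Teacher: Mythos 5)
Your proposal matches the paper's proof essentially step for step: the paper likewise constructs the Lin extension $\phi$, invokes Lemma~\ref{lem:TechnicalLemma} to replace $u$ with a homotopic strongly full unitary, applies Lemma~\ref{lem:StronglyFull} and Theorem~\ref{thm:KucerovskyNgAbsorption} to make the inclusion $\iota: C^*(\pi\circ\phi(\A), u) \hookrightarrow \C(\B)$ absorbing, and then concludes by transcribing the argument of Lemma~\ref{lem:July30201812:50PM}. You have correctly identified both the key obstacle (fullness of $\iota$) and the mechanism that overcomes it.
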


\begin{proof}
By the hypotheses, there exist a sequence $\{ p_n \}_{n=1}^{\infty}$
of pairwise orthogonal projections in $\B$,
a sequence $\{ \phi_n \}_{n=1}^{\infty}$ of *-embeddings from
$\A$ to $\B$, and a sequence $\{ v_{n,1} \}_{n=1}^{\infty}$ of
partial isometries in $\B$ such that the following statements
are true:
\begin{enumerate}
\item $p_m \sim p_n$ for all $m, n$. In fact,
$v_{n, 1}^* v_{n, 1} = p_1$ and $v_{n, 1} v_{n, 1}^* = p_n$ for all
$n$.
\item $\sum_{n=1}^{\infty} p_n = 1_{\Mul(\B)}$, where the sum
converges strictly.
\item $\phi_n(1) = p_n$ for all $n$.
\item $v_{n, 1} \phi_1(x) v_{n, 1}^* = \phi_n(x)$, for all
$x \in \A$ and for all $n$.
\end{enumerate}

Let $\phi : \A \rightarrow \Mul(\B)$ be the unital *-homomorphism
given by
$$\phi =_{df} \sum_{n=1}^{\infty} \phi_n.$$

Then by \cite{LinStableUniqueness} (see also \cite{ElliottKucerovsky}
Theorem 17),  $\pi \circ \phi$ is a unital trivial absorbing extension.
(In the literature, $\phi$ is often called the ``Lin extension".)

We may identify $\A^d_{\B} = (\pi \circ \phi(\A))'.$

Let $u \in \A^d_{\B}$ be a unitary such that
$$u \oplus 1 \sim_h 1 \oplus 1$$
in $\M_2 \otimes \A^d_{\B}$.

By Lemma \ref{lem:TechnicalLemma},
there exists a unitary $v \in \A^d_{\B}$ such that
$$u \sim_h v$$
in $\A^d_{\B}$, and $v$ is strongly full in $\C(\B)$.
Hence, we may assume that $u$ is a strongly full element
of $\C(\B)$.
Hence, by Lemma \ref{lem:StronglyFull},
$C^*(u, \pi \circ \phi(\A))$ is a strongly full unital C*-subalgebra
of $\C(\B)$.

Hence, by Theorem \ref{thm:KucerovskyNgAbsorption},
the inclusion map
$$\iota : C^*(u, \pi \circ \phi(\A)) \hookrightarrow \C(\B)$$
is a unital absorbing extension.

The rest of the proof is exactly the same as that of
Lemma \ref{lem:July30201812:50PM}.
\end{proof}

\begin{thm}
Let $\A$ be a unital separable simple nuclear C*-algebra, and
$\B$ a separable stable simple C*-algebra with a nonzero projection,
strict comparison of positive elements, and for which $T(\B)$ has
finitely many extreme points.

Then $\A^d_{\B}$ is $K_1$-injective.  Moreover, for all $n \geq 1$,
the map
$$U(\M_n \otimes \A^d_{\B})/ U(\M_n \otimes \A^d_{\B})_0
\rightarrow U(\M_{2n} \otimes \A^d_{\B})/ U(\M_{2n} \otimes \A^d_{\B})_0$$
given by
$$[u] \mapsto [u \oplus 1]$$
is injective.
\label{thm:K1injectivePart2}
\end{thm}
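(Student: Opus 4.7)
The plan is to follow exactly the template used in the proof of Theorem~\ref{thm:K1injective}, but with Lemma~\ref{lem:Oct20201812PM} (the simple-$\A$, strict-comparison result) substituting for Lemma~\ref{lem:July30201812:50PM} (the $\K$ or purely infinite case). First I would check that Lemma~\ref{lem:K0PaschkeDualAlgebra} applies: $\A$ is unital separable nuclear (simplicity being stronger than what that lemma needs) and $\B$ is separable stable, so part (a) yields $1 \sim 1 \oplus 1$ in $\M_2 \otimes \A^d_{\B}$. Iterating this Murray--von Neumann equivalence, $1_{\A^d_\B} \sim 1_n$ in $\M_n \otimes \A^d_{\B}$ for every $n \geq 1$, and an implementing partial isometry produces a unital $*$-isomorphism $\A^d_{\B} \cong \M_n \otimes \A^d_{\B}$.

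Next I would observe that under this isomorphism the stabilization map $\M_n \otimes \A^d_{\B} \to \M_{2n} \otimes \A^d_{\B}$ sending $u$ to $u \oplus 1_n$ is conjugate to the $n = 1$ case $\A^d_{\B} \hookrightarrow \M_2 \otimes \A^d_{\B}$, $u \mapsto u \oplus 1$. Hence injectivity of the stabilization map on unitary quotients for general $n$ reduces to this single case, which is precisely the conclusion of Lemma~\ref{lem:Oct20201812PM}. The $K_1$-injectivity assertion then follows: if $[u] \in U(\A^d_\B)/U(\A^d_\B)_0$ dies in $K_1$, then $u \oplus 1_n \sim_h 1_{n+1}$ in some $\M_{n+1} \otimes \A^d_\B$, and repeatedly applying the $n \to 2n$ injectivity (after enlarging $n+1$ to a power of $2$) brings the homotopy down to $u \sim_h 1$ in $\A^d_\B$.

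The main obstacle is one of hypothesis-matching rather than of substance. Lemma~\ref{lem:Oct20201812PM} lists the existence of a $*$-embedding $\A \hookrightarrow \B$ among its assumptions, whereas the theorem does not. I expect this to be addressed in one of two ways. One option is to verify that such an embedding is automatic under the combined hypotheses, for example by finding a $*$-embedding of $\A$ into a suitable hereditary subalgebra of $\B$ using the nonzero projection, strict comparison, and the finite-dimensional tracial state space to match traces. The alternative is to reprove the content of Lemma~\ref{lem:Oct20201812PM} starting from an \emph{arbitrary} unital trivial absorbing extension $\phi : \A \to \Mul(\B)$ — whose existence is built into Definition~\ref{df:PaschkeDualAlgebra} — in place of the explicit Lin-type extension. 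This second route requires revisiting the full-unitary production step (where Lemma~\ref{lem:TechnicalLemma} is invoked) and the application of Theorem~\ref{thm:KucerovskyNgAbsorption}; if those can be rerun from an abstract absorbing $\phi$, then the theorem follows without any additional embedding assumption.
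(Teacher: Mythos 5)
Your proposal is exactly the paper's proof: the paper argues in two lines that Lemma~\ref{lem:K0PaschkeDualAlgebra} gives $1 \sim 1 \oplus 1$, hence $\A^d_{\B} \cong \M_n \otimes \A^d_{\B}$ for all $n$, and then cites Lemma~\ref{lem:Oct20201812PM} in place of Lemma~\ref{lem:July30201812:50PM}, just as you do.

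On the hypothesis-matching point you raise: the paper does not resolve it either way --- it simply invokes Lemma~\ref{lem:Oct20201812PM}, so the $*$-embedding $\A \hookrightarrow \B$ is implicitly in force (and indeed it reappears explicitly as a hypothesis of Theorem~\ref{thm:Uniqueness1}, where Theorem~\ref{thm:K1injectivePart2} is applied). Be warned that your first suggested remedy cannot work: such an embedding is not automatic under the stated hypotheses. For instance, if $\A = O_2$ and $\B$ is, say, the stabilization of a UHF algebra, then $\B$ satisfies all the stated conditions, yet any $*$-embedding of $\A$ into $\B$ would carry the unit of $\A$ to a nonzero projection $p \in \B$ and the (faithful, finite on projections) trace on $p\B p$ would pull back to a tracial state on $O_2$, which does not exist. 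So the embedding should simply be regarded as a standing hypothesis of the theorem (your second route, rerunning Lemma~\ref{lem:Oct20201812PM} from an arbitrary absorbing extension without the Lin-type picture, is not what the paper does and would require new work, since both the fullness argument via Lemma~\ref{lem:TechnicalLemma} and the block-diagonal structure of $\phi$ are used there). With that hypothesis in place, your argument coincides with the paper's.
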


\begin{proof}
The proof is exactly the same as that of
Theorem \ref{thm:K1injective}, except that
Lemma \ref{lem:July30201812:50PM} is replaced with
Lemma \ref{lem:Oct20201812PM}.
\end{proof}

We fix a terminology that will only be used in the next theorem.
Let $\A$ be a unital separable nuclear C*-algebra, and
let $\B$ be a separable stable C*-algebra.
Let $\phi : \A \rightarrow \Mul(\B)$ be a unital trivial absorbing
extension.
Recall that we can identify $\A^d_{\B} = (\pi \circ \phi(\A))'$
($\subseteq \C(\B)$).
Since $\pi \circ \phi$ is injective, we may identify $\A$
with $\pi \circ \phi(\A)$.  When $\A$ and $\A^d_{\B}$
sit in $\C(\B)$ in the above manner, we say that $\A$ and
$A^d_{\B}$ are in \emph{standard position} in $\C(\B)$.

\begin{thm}
Let $\A$ be a separable simple unital nuclear C*-algebra, and let
$\B$ be a separable stable simple C*-algebra.
Suppose that $\A$ and $\A^d_{\B}$ are in standard position in
$\C(\B)$.

Then
$$\A' = \A^d_{\B} \makebox{  and  } (\A^d_{\B})' = \A.$$
\label{thm:MorePaschkeDuality}
\end{thm}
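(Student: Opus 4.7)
The first equality $\A' = \A^d_{\B}$ is just a restatement of the definition of standard position (namely, $\A^d_{\B}$ is the relative commutant of $\pi\circ\phi(\A)$ in $\C(\B)$), and the inclusion $\A \subseteq (\A^d_{\B})'$ is immediate, since any set is contained in its own bicommutant. All the content is in the reverse inclusion $(\A^d_{\B})' \subseteq \A$.

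My plan for this is to exploit the rich internal structure of $\A^d_{\B}$ coming from the absorbing decomposition of $\phi$. Iterating Lemma~\ref{lem:K0PaschkeDualAlgebra} (or invoking Elliott--Kucerovsky \cite{ElliottKucerovsky} directly), one may write $\phi = \bigoplus_{n=1}^{\infty} \phi_n$ in $\Mul(\B)$ with each $\phi_n \colon \A \to p_n \Mul(\B) p_n$ unitarily equivalent to $\phi$, the $\{p_n\}$ pairwise orthogonal and summing strictly to $1$, and partial isometries $v_{nm} \in p_n \Mul(\B) p_m$ intertwining $\phi_m$ and $\phi_n$. A direct check shows each $v_{nm}$ commutes with $\phi(\A)$, so $\pi(v_{nm}) \in \A^d_{\B}$. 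Given $y \in (\A^d_{\B})'$ with lift $Y \in \Mul(\B)$, the conditions $[Y, v_{nm}] \in \B$ for all $n, m$ translate --- via a block-matrix computation on $Y = \sum_{n,m} p_n Y p_m$ --- into block-diagonality modulo $\B$ (that is, $p_n Y p_m \in \B$ whenever $n \neq m$) together with the coherence relation $p_n Y p_n - v_{n1}(p_1 Y p_1) v_{n1}^* \in \B$. Next, commuting $y$ with elements of the form $\sum_n T_n$ where $T_n \in \phi_n(\A)' \cap p_n \Mul(\B) p_n$ (all of which lie in $\A^d_{\B}$) further constrains each diagonal block $p_n Y p_n$ to lie, modulo $p_n \B p_n$, in the relative double commutant of $\phi_n(\A)$ inside $p_n \Mul(\B) p_n$.

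The main obstacle will be the final collapse: showing that this corner double commutant reduces, modulo compacts, to $\phi_n(\A) + p_n \B p_n$, so that $p_n Y p_n - \phi_n(x_n) \in p_n \B p_n$ for some $x_n \in \A$. This is a bicommutant statement adapted to multiplier algebras rather than the classical von Neumann version, and I do not expect it to follow from any single off-the-shelf result. The tools I would try are: (i) each $p_n$ is Murray--von Neumann equivalent to $1$ in $\Mul(\B)$, so the corner $p_n \B p_n$ is isomorphic to $\B$ itself and $\phi_n$ transports to an absorbing extension of $\A$ by $\B$; and (ii) the Kucerovsky--Ng absorption (Theorem~\ref{thm:KucerovskyNgAbsorption}) applied in the corner, leveraging the nuclearity of $\A$ and simplicity of both $\A$ and $\B$ to rule out spurious elements. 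Once the collapse is established, injectivity of each $\phi_n$ (from simplicity of $\A$) combined with the coherence relation forces $x_n = x$ for a single $x \in \A$ independent of $n$, giving $Y - \phi(x) \in \B$ and hence $y = \pi \circ \phi(x) \in \A$, completing the argument.
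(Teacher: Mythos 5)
Your framing of the problem is right: the first equality and $\A \subseteq (\A^d_{\B})'$ are immediate (the paper dismisses them in one line), and everything lives in $(\A^d_{\B})' \subseteq \A$. But your proposal does not prove that inclusion, for two concrete reasons. First, the step you yourself flag as the ``final collapse'' --- that the relative double commutant of $\phi_n(\A)$ in the corner $p_n \Mul(\B) p_n$, taken modulo $p_n \B p_n$, reduces to $\phi_n(\A) + p_n \B p_n$ --- is not a technical wrinkle but the entire content of the theorem. Since $p_n \sim 1_{\Mul(\B)}$, the corner extension is, up to conjugation, the very absorbing extension you started from, so this step is a restatement of $(\A^d_{\B})' \subseteq \A$ in a multiplier-algebra guise; the argument is circular unless you supply an independent proof, and neither Lemma~\ref{lem:K0PaschkeDualAlgebra} nor Theorem~\ref{thm:KucerovskyNgAbsorption} can do it --- absorption results control unitary equivalence classes of extensions, not commutants of commutants. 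The paper does not attempt a direct argument at all: it invokes Theorem 1 of \cite{NgDoubleCommutantFinite}, checking that its hypotheses hold because the inclusion $\iota : \A \rightarrow \C(\B)$ is a unital trivial absorbing extension (by \cite{ElliottKucerovsky}), which makes the class hypothesis automatic and renders the CFP hypothesis unnecessary; all the hard analysis is in that reference. (A minor additional imprecision: absorption gives $\phi \oplus \phi \sim \phi$ only modulo $\B$, so your intertwiners $v_{nm}$ commute with $\phi(\A)$ only modulo $\B$; this is harmless for putting $\pi(v_{nm})$ into $\A^d_{\B}$, but it is another sign that the exact block bookkeeping you describe has to be redone modulo $\B$ throughout.)

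Second, even granting all the blockwise conclusions, the final assembly fails. Knowing $p_n Y p_m \in \B$ for all $n \neq m$ and $p_n Y p_n - \phi_n(x) \in p_n \B p_n$ for all $n$ does not yield $Y - \phi(x) \in \B$: the blocks of $Y - \phi(x)$ sum only strictly, and a strictly convergent sum of elements $b_n \in p_n \B p_n$ lies in $\B$ only when $\| b_n \| \rightarrow 0$ (for instance $\sum_n b_n$ with each $b_n$ a norm-one element of $p_n \B p_n$ is in $\Mul(\B) \setminus \B$ even though every block is in $\B$), and similarly for the off-diagonal part. Your block-matrix computation produces no such uniform norm decay, and obtaining quantitative control of exactly this kind is where proofs of double commutant theorems in corona algebras (Paschke's original $\B = \K$ case, and \cite{NgDoubleCommutantFinite}) do their real work. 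So the proposal, as it stands, reduces the theorem to an unproven corner version of itself and then concludes via a summation step that is false without additional estimates.
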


\begin{proof}
The first equality follows trivially from the definition of
$\A^d_{\B}$.

The proof of the second equality is exactly the same as that
of \cite{NgDoubleCommutantFinite} Theorem 1. We note that, in our
context,
the inclusion map $\iota : \A \rightarrow \C(\B)$
is a unital trivial absorbing extension.  Hence, the hypothesis,
that $[\iota] \in \mathcal{T}$ (notation as in
\cite{NgDoubleCommutantFinite} Theorem 1)
in \cite{NgDoubleCommutantFinite} Theorem 1 is satisfied.  Also,
since $\iota$ is absorbing, the hypothesis that $\B$ satisfies the
CFP in \cite{NgDoubleCommutantFinite} Theorem 1 is unnecessary.
\end{proof}

Thus, the Paschke dual algebra is ``dual" in still another sense.

\section{Essential codimension}
\label{sec:essential-codimension}

In what follows, we will let $KK$ denote the  generalized
homomorphism picture of KK theory (see, for example, \cite{JensenThomsenBook}
Chapter 4).

In \cite{LeeFirst}, Lee observed that the BDF notion of essential
codimension (Definition \ref{df:essentialcodimension})
is a special case of an element of $KK^0$.  He thus gave the following
definition:

\begin{df}
Let $\B$ be a separable stable C*-algebra, and let $P, Q \in \Mul(\B)$ be
projections such that $P - Q \in \B$.

Let $\phi, \psi : \mathbb{C} \rightarrow \Mul(\B)$ be *-homomorphisms for which
$\phi(1) = P$ and $\psi(1) = Q$.

The \emph{essential codimension} of $P$ and $Q$ is given by
\[
[P:Q] =_{df} [\phi, \psi] \in KK(\mathbb{C}, \B) \cong K_0(\B).
\]
Here, $[\phi, \psi]$ is the class of the generalized homomorphism
$(\phi, \psi)$ in $KK(\mathbb{C}, \B)$.
\label{df:generalizedessentialcodimension}
\end{df}

It is not hard to see (e.g., \cite{LeeJFA2013} Remark 2.2)
that in the case where $\B = \K$, Definition
\ref{df:generalizedessentialcodimension} coincides with the original
BDF essential codimension (Definition \ref{df:essentialcodimension}).
Thus, $KK^0$ concerns the local aspects of operator theory,
as opposed to $KK^1$ which deals with the asymptotic aspects
(e.g., classifying essentially
normal operators up to unitary equivalence modulo the compacts).


Towards generalizing the BDF essential codimension result (Theorem \ref{thm:BDF}),
we recall the notion of proper asymptotic unitary equivalence (see
\cite{DadarEilersAsympUE}).

\begin{df}
  \label{def:proper-asymptotic}
  Let $\A$, $\B$ be C*-algebras, with $\B$ nonunital.
  Let $\phi, \psi : \A \rightarrow \Mul(\B)$ be two *-homomorphisms.
  \begin{enumerate}
  \item $\phi$ and $\psi$
    are said to be \emph{asymptotically unitarily equivalent}
    ($\phi \sim_{asymp} \psi$) if there exists
    a (norm-) continuous path $\{ u_t \}_{t \in [0, \infty)}$
    of unitaries in $\Mul(\B)$ such that for all $a \in \A$,
    \begin{enumerate}
    \item[i.] $\phi(a) - u_t \psi(a) u_t^* \in \B$, for all $t$, and
    \item[ii.]  $\| \phi(a) - u_t \psi(a) u_t^* \| \rightarrow 0$
      as $t \rightarrow \infty$.
    \end{enumerate}
  \item $\phi$ and $\psi$ are said to be \emph{properly asymptotically
      unitarily equivalent} ($\phi \underline{\approx} \psi$) if
    $\phi$ and $\psi$ are asymptotically unitarily equivalent where the
    path of unitaries satisfy that $u_t \in \mathbb{C}1 +\B$ for all $t$.
  \end{enumerate}
\end{df}

We note that proper asymptotic unitary equivalence is a \emph{local notion}.
This is in fitting with the BDF essential codimension theorem.

In \cite{DadarEilersAsympUE}, the following generalization of Theorem \ref{thm:BDF}
was given:  Let $\A, \B$ be separable C*-algebras with $\B$ stable, and let
$\phi, \psi : \A \rightarrow \Mul(\B)$ be *-homomorphisms such that
$\phi(a) - \psi(a) \in \B$.  Then $[\phi, \psi] = 0$ in $KK(\A, \B)$ if and only
if there exists a *-homomorphism $\sigma : \A \rightarrow \Mul(\B)$
such that $\phi \oplus \sigma \underline{\approx} \psi \oplus \sigma$.

We note that \cite{DadarEilersAsympUE} was inspired by and extensively used ideas
from the earlier stable uniqueness paper \cite{LinStableUniqueness}.
We also note that results of the above type can be used to produce (unbounded)
stable uniqueness theorems.  This idea is essentially
due to Lin (\cite{LinStableUniqueness}).

We now introduce and prove our generalization of Theorem \ref{thm:BDF}.
The proof essentially follows that of \cite{LeeFirst} Theorem 2.11 which
follows that of \cite{DadarEilersAsympUE} Theorem 3.12.  As noted above,
\cite{DadarEilersAsympUE} used extensively the ideas of
\cite{LinStableUniqueness}.  In fact, the argument is essentially that of
\cite{LinStableUniqueness}:
A proper asymptotic unitary equivalence
induces a continuous path of
automorphisms on $\phi(\A) + \B$.  Then, following \cite{LinStableUniqueness},
we prove innerness
of the automorphisms.
We sketch the proof for the convenience of the reader.

Recall that $KK$ denotes the generalized homomorphism picture of KK theory
(e.g., see \cite{JensenThomsenBook} Chapter 4).  In the next proof, we will
let $KK_{Higson}$ denote Higson's definition of KK theory (e.g., see
\cite{HigsonKK} Section 2).

Recall that a trivial extension
$\phi$ is said to \emph{absorb the zero extension}
if $\pi \circ \phi \oplus 0 \sim \pi \circ \phi$.

\begin{thm}
Let $\A, \B$ be separable C*-algebras with $\A$ nuclear and $\B$
stable and simple purely infinite.
Let $\phi, \psi : \A \rightarrow \Mul(\B)$ be essential
extensions
such that
$\phi(a) - \psi(a) \in \B$ for all $a \in \A$.

Suppose also that either both $\phi$ and $\psi$ are unital, or both
$\phi$ and $\psi$ absorb the zero extension.

Then $[\phi, \psi] = 0$ in $KK(\A, \B)$ if and only if
$\phi \underline{\approx} \psi$.

\label{thm:Uniqueness0}
\end{thm}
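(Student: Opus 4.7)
My plan for the ``only if'' direction is the standard argument in the generalized homomorphism picture of $KK$-theory. A path $\{u_t\}_{t \in [0,\infty)} \subset \mathbb{C}1 + \B$ implementing $\phi \underline{\approx} \psi$ yields a norm-continuous family of KK-cycles $(u_t \psi u_t^*, \psi)$, each conjugate via $u_t \in \Mul(\B)$ to the degenerate cycle $(\psi, \psi)$ and hence trivial in $KK(\A, \B)$; the norm-limit of this family as $t \to \infty$ is the cycle $(\phi, \psi)$, and continuity of the $KK$-class along such paths (conveniently expressed in $KK_{Higson}$) then forces $[\phi, \psi] = 0$.

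For the converse, I would begin by invoking the Dadarlat--Eilers stable uniqueness theorem recalled in the introduction: since $[\phi, \psi] = 0$, there exist a *-homomorphism $\sigma : \A \to \Mul(\B)$ and a norm-continuous path $\{W_t\}_{t \in [0,\infty)}$ of unitaries in $\mathbb{C}1_{\M_2 \otimes \Mul(\B)} + \M_2 \otimes \B$ implementing $\phi \oplus \sigma \underline{\approx} \psi \oplus \sigma$. The task is then to absorb the auxiliary summand $\sigma$ into $\phi$ and into $\psi$ in a manner compatible with the proper (rather than merely asymptotic) condition on the implementing unitaries.

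Next I would exploit the hypotheses on $\B$. Because $\A$ is nuclear and $\B$ is stable, separable, simple, and purely infinite, the Elliott--Kucerovsky formulation of Kirchberg--Phillips absorption (\cite{ElliottKucerovsky} Theorem 17) yields, under either alternative hypothesis on $\phi,\psi$, that both extensions are themselves absorbing. Consequently there are unitaries $U, U' \in \Mul(\M_2 \otimes \B)$ witnessing $\phi \oplus \sigma \sim \phi \oplus 0$ and $\psi \oplus \sigma \sim \psi \oplus 0$. Conjugating $W_t$ by these absorption unitaries produces a norm-continuous path in $\Mul(\B)$ along which $\phi$ and $\psi$ are asymptotically unitarily equivalent. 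However, $U$ and $U'$ a priori live only in $\Mul(\B)$, not in $\mathbb{C}1 + \B$, so this does not yet witness a \emph{proper} asymptotic equivalence.

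The main obstacle is precisely the upgrade from unitaries in $\Mul(\B)$ to a path inside $\mathbb{C}1 + \B$. My plan for this step is to follow the innerness strategy of \cite{LinStableUniqueness}, as adapted in \cite{DadarEilersAsympUE} and \cite{LeeFirst}: the combined path induces, modulo $\B$, a continuous path of automorphisms of $\pi \circ \phi(\A) \subset \C(\B)$, implemented by a continuous path of unitaries in $\A^d_\B$ starting at $1$. The $K_1$-injectivity of $\A^d_\B$ established in Theorem~\ref{thm:K1injective} should force this path to lie entirely in $U(\A^d_\B)_0$, and a path-lifting argument using the absorbing character of $\phi$ and $\psi$ should then produce a continuous path of unitaries in $\mathbb{C}1 + \B$ implementing $\phi \underline{\approx} \psi$. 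The Dadarlat--Eilers input and the absorption of $\sigma$ are essentially automatic in the purely infinite setting; the difficulty lies in this final innerness/lifting step, and forcing the implementing unitaries into $\mathbb{C}1 + \B$ rather than the full multiplier algebra is exactly what requires the Paschke dual machinery of Section~\ref{sec:paschke-dual-algebra}.
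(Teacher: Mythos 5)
Your overall plan follows the same architecture as the paper's proof (get asymptotic unitary equivalence first, then upgrade to properness via the Paschke dual and the Lin/Dadarlat--Eilers innerness argument), but there is a genuine gap at exactly the step you yourself identify as the crux. You assert that the induced path of unitaries in $\A^d_{\B}$ ``starts at $1$'' and that $K_1$-injectivity (Theorem~\ref{thm:K1injective}) ``forces this path to lie entirely in $U(\A^d_{\B})_0$.'' Neither is correct as stated: the path $\pi(u_t)$ starts at $\pi(u_0)$, which is a priori an arbitrary unitary of $\A^d_{\B}$ (if it really started at $1$, connectedness alone would place the whole path in $U(\A^d_{\B})_0$ and no $K$-theory would be needed), and $K_1$-injectivity only converts the \emph{vanishing} of the class $[\pi(u_0)] \in K_1(\A^d_{\B})$ into $\pi(u_0) \sim_h 1$; it does not produce that vanishing. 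The missing ingredient --- and the actual point where the hypothesis $[\phi,\psi]=0$ enters the Paschke dual --- is the computation of this $K_1$ class: in the paper one observes $[\phi, u_0\phi u_0^*] = [\phi,\psi] = 0$, passes to Higson's picture of $KK$, uses \cite{HigsonKK} Lemma 2.3 to get $[\phi,\phi,u_0^*]=0$, and then invokes Thomsen's Paschke duality theorem (\cite{ThomsenAbsorption} Theorem 3.2), which sends $[\pi(u_0)] \in K_1(\A^d_{\B})$ to $[\phi,\phi,u_0^*]$, to conclude $[\pi(u_0)]=0$. Only then does Theorem~\ref{thm:K1injective} give $\pi(u_0) \sim_h 1$ in $\A^d_{\B}$, which allows one to correct by a unitary $v \in \mathbb{C}1+\B$ so that the corrected path starts at $1$, run the innerness argument (\cite{DadarEilersAsympUE} Proposition 2.15, following \cite{LinStableUniqueness}) inside $\phi(\A)+\B$, and finally peel off the $\A$-part via $w_t = v v_t \phi(a_t)^* \in 1+\B$ to get properness. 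Your proposal omits the entire $K_1$-class identification, so the appeal to $K_1$-injectivity has nothing to act on; this is not a routine detail but the reason the Paschke dual machinery of Section~\ref{sec:paschke-dual-algebra} is set up in the first place.

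Two smaller points. First, your route to mere asymptotic unitary equivalence (conjugating the properly implementing path for $\phi\oplus\sigma \underline{\approx} \psi\oplus\sigma$ by the absorption unitaries) is too quick as written: the errors introduced by those unitaries are fixed elements of $\B$ that do not tend to $0$ in norm, so the resulting path does not obviously satisfy condition (ii) of Definition~\ref{def:proper-asymptotic}. This step is a known result and the paper simply cites \cite{LeeFirst} Theorem 2.5 for it, so it is citable rather than fatal, but your sketched derivation does not establish it. Second, the reduction to the unital case (when $\phi,\psi$ are not unital but absorb the zero extension) needs to be addressed, as in the paper, by passing to the unitization and using \cite{ElliottKucerovsky} to see that the unitized extensions are unitally absorbing.
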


\begin{proof}
The ``if" direction is trivial.

We now prove the ``only if" direction.
Note that by \cite{ElliottKucerovsky} Theorem 17, both $\phi$
and $\psi$ are absorbing extensions.\footnote{Of course, when both
are unital, we mean that they are unitally absorbing.}

Let $\widetilde{\A}$ denote the unitization of $\A$ if $\A$ is nonunital,
and $\A \oplus \mathbb{C}$ if $\A$ is unital.
By \cite{ElliottKucerovsky}, if $\phi : \A \rightarrow \Mul(\B)$ is an
absorbing extension, then the map $\widetilde{\phi} :
\widetilde{\A} \rightarrow \Mul(\B)$ given by
$\widetilde{\phi} |_{\A} = \phi$ and
$\widetilde{\phi}(1) = 1$ is a unital absorbing extension.  Thus, we may assume that
$\A$ is unital and $\phi$ and $\psi$ are unital absorbing trivial
extensions.

As in the previous section, we may identify the Paschke dual algebra
$\A^d_{\B} = (\pi \circ \phi(\A))' \in \C(\B)$.

By \cite{LeeFirst} Theorem 2.5, $\phi \sim_{asym} \psi$.
I.e., there exists a norm continuous path  $\{ u_t \}_{t \in [0, \infty)}$
of unitaries in $\Mul(\B)$ such that
$$u_t \phi(a) u_t^* - \psi(a) \in \B$$
for all $t$ and for all $a \in \A$, and
$$\| u_t \phi(a) u_t^* - \psi(a) \| \rightarrow 0$$
as $t \rightarrow \infty$, for all $a \in \A$.

It is trivial to see that this implies that
$$[\phi, u_0\phi u_0^*] = [ \phi, \psi ] = 0,$$
and that $\pi(u_t) \in (\pi \circ \phi(\A))' = \A^d_{\B}$ for all $t$.

It is well-known that we have
a group isomorphism $KK(\A, \B) \rightarrow KK_{Higson}(\A, \B) : [\phi, \psi]
\rightarrow [\phi, \psi, 1]$.
Hence, $[\phi, u_0 \phi u_0^*, 1] = 0$ in $KK_{Higson}(\A, \B)$.
Hence, by \cite{HigsonKK} Lemma 2.3, $[\phi, \phi, u_0^*] = 0$
in $KK_{Higson}(\A, \B)$.

By Thomsen's Paschke duality theorem (\cite{ThomsenAbsorption} Theorem 3.2),
there is a group isomorphism $K_1(\A^d_{\B}) \rightarrow KK_{Higson}(\A, \B)$
which sends $[\pi(u_0)]$ to $[\phi, \phi, u_0^*]$.
Hence, $[\pi(u_0)] = 0$ in $K_1(\A^d_{\B})$. Hence, by
Theorem \ref{thm:K1injective}, $\pi(u_0) \sim_h 1$ in $\A^d_{\B} =
(\pi \circ \phi(\A))'$.
Hence, there exists a unitary $v \in \mathbb{C}1 + \B$ such that
$v^* u_0 \sim_h 1$ in $\pi^{-1}(\A^d_{\B})$.

Hence, modifying an initial segment of $\{ v^*u_t \}_{t \in [0, \infty)}$
if necessary, we may assume that $\{ v^*u_t \}_{t \in [0, \infty)}$ is
a norm continuous path of unitaries in $\pi^{-1}(\A_d^{\B})$ such that
$v^*u_0 = 1$.

Now for all $t \in [0,\infty)$, let $\alpha_t \in Aut(\phi(\A) + \B)$ be given
by $\alpha_t(x) =_{df} v^* u_t x u_t^* v$ for all $x \in \phi(\A) + \B$.
Thus, $\{ \alpha_t \}_{t \in [0, \infty}$ is a norm continuous path of
automorphisms of $\phi(\A) + \B$ such that $\alpha_0 = id$.
Hence, by \cite{DadarEilersAsympUE} Proposition 2.15 (see also
\cite{LinStableUniqueness} Theorem 3.2 and 3.4),
there exist a continuous path $\{ v_t \}_{t\in [0, \infty)}$ of unitaries
in $\phi(\A) + \B$ such that
$v_0 = 1$ and
$\| v_t x v_t^* - v^* u_t x u_t^* v \| \rightarrow 0$ as $t \rightarrow \infty$
for all $x \in \phi(\A) + \B$.
Thus, $\| v v_t x v_t^* v^* - u_t x u_t^* \| \rightarrow 0$ as
$t \rightarrow \infty$
for all $x \in \phi(\A) + \B$.

We now proceed as in the last part of the proof of \cite{DadarEilersAsympUE} Proposition 3.6 Step 1 (see
also the proof of \cite{LinStableUniqueness} Theorem 3.4).
For all $t \in [0, \infty)$, let $a_t \in \A$ and $b_t \in \B$ such that
$v v_t = \phi(a_t) + b_t$.
Since $\pi \circ \phi$ is injective, we have that for all $t$,
$a_t$ is a unitary in $\A$, and hence, $\phi(a_t)$ is a unitary in $\phi(\A) + \B$.
Note also that since $\pi \circ \phi = \pi \circ \psi$ and both maps are
injective, $\| a_t a a_t^* - a \| \rightarrow 0$ as $t \rightarrow \infty$ for
all $a \in \A$.
For all $t$, let $w_t =_{df} v v_t \phi(a_t)^* \in 1 + \B$.
Then $\{ w_t \}_{t \in [0,1)}$ is a norm continuous path of unitaries in
$1 + \B$, and for all $a \in \A$,
\begin{eqnarray*}
& & \| w_t \phi(a) w_t^* - \psi(a) \|\\
& \leq & \| w_t \phi(a) w_t^* - v v_t \phi(a) v_t^* v^* \|
+ \| v v_t \phi(a) v_t^* v^* - u_t \phi(a) u_t^* \|
+ \| u_t \phi(a) u_t^* - \psi(a) \|\\
& = & \| v v_t\phi( a_t a a_t^* - a) v_t^* v^* \| + \| v v_t \phi(a) v_t^* v^* - u_t \phi(a) u_t^* \|
+ \| u_t \phi(a) u_t^* - \psi(a) \|\\
& & \rightarrow 0.
\end{eqnarray*}

\end{proof}

We have another generalization of the BDF essential codimension theorem:

\begin{thm}
Let $\A$ be a unital separable simple nuclear C*-algebra, and $\B$ a
separable simple stable C*-algebra with a nonzero projection,
strict comparison of positive elements and for which $T(\B)$ has finitely
many extreme points.

Suppose that there exists a *-embedding $\A \hookrightarrow \B$.

Let $\phi, \psi : \A \rightarrow \Mul(\B)$ be unital extensions
such that
$\phi(a) - \psi(a) \in \B$ for all $a \in \A$.

Then $[\phi, \psi] = 0$ in $KK(\A, \B)$ if and only if
$\phi \underline{\approx} \psi$.
\label{thm:Uniqueness1}
\end{thm}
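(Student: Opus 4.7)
The plan is to follow the proof of Theorem \ref{thm:Uniqueness0} almost verbatim, with Theorem \ref{thm:K1injectivePart2} substituted for Theorem \ref{thm:K1injective} at the pivotal step. The ``if'' direction is trivial, so I focus on the forward implication. First I would verify that $\phi$ and $\psi$ are unital absorbing trivial extensions. Since $\A$ is simple and $\phi,\psi$ are unital, both maps are injective; moreover, for any nonzero $a \in \A$ there exist $x_1,\ldots,x_n, y_1,\ldots,y_n \in \A$ with $\sum_i x_i a y_i = 1$ (because $a$ generates $\A$ as a two-sided ideal), whence $\phi(a)$ generates $\Mul(\B)$ as a two-sided ideal. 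Strict comparison of positive elements in $\B$ entails the CFP, so Theorem \ref{thm:KucerovskyNgAbsorption} applies to give that $\phi$ and $\psi$ are unital absorbing.

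Next, $[\phi, \psi] = 0$ in $KK(\A, \B)$ combined with absorption of both maps produces a norm-continuous path $\{u_t\}_{t \in [0, \infty)}$ of unitaries in $\Mul(\B)$ with $\phi \sim_{asym} \psi$: the Dadarlat--Eilers theorem \cite{DadarEilersAsympUE} yields a *-homomorphism $\sigma$ with $\phi \oplus \sigma \underline{\approx} \psi \oplus \sigma$, and absorbing $\sigma$ into $\phi$ and $\psi$ via unitaries in $\Mul(\B)$ converts this stable proper asymptotic equivalence into an asymptotic (not necessarily proper) equivalence between $\phi$ and $\psi$, exactly as in \cite{LeeFirst} Theorem 2.5. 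Identifying $\A^d_{\B} = (\pi \circ \phi(\A))'$, we have $\pi(u_t) \in \A^d_{\B}$ for all $t$; via the Higson picture of $KK$ and Thomsen's Paschke duality (\cite{ThomsenAbsorption} Theorem 3.2), the class $[\pi(u_0)] \in K_1(\A^d_{\B})$ corresponds to $[\phi, u_0 \phi u_0^*, 1] = [\phi, \psi, 1] = 0$. Hence $[\pi(u_0)] = 0$ in $K_1(\A^d_{\B})$, and Theorem \ref{thm:K1injectivePart2}---whose hypotheses are precisely those assumed in the present theorem---yields $\pi(u_0) \sim_h 1$ in $\A^d_{\B}$.

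The final part of the proof of Theorem \ref{thm:Uniqueness0} then transplants without change. After adjusting an initial segment of $\{u_t\}$ by a unitary in $\mathbb{C}1 + \B$, we may assume the path lies in $\pi^{-1}(\A^d_{\B})$ with $u_0 = 1$. The conjugation $\alpha_t(x) = u_t x u_t^*$ defines a norm-continuous path of automorphisms of $\phi(\A) + \B$ with $\alpha_0 = \mathrm{id}$, which by \cite{DadarEilersAsympUE} Proposition 2.15 is approximately implemented by a norm-continuous path $\{v_t\}$ of unitaries in $\phi(\A) + \B$ with $v_0 = 1$. Writing $v_t = \phi(a_t) + b_t$, the injectivity of $\pi \circ \phi$ together with $\pi \circ \phi = \pi \circ \psi$ forces each $a_t$ to be a unitary in $\A$ with $\|a_t a a_t^* - a\| \to 0$ for every $a \in \A$; then $w_t := v_t \phi(a_t)^* \in 1 + \B$ realizes the proper asymptotic unitary equivalence $\phi \underline{\approx} \psi$ via a triangle inequality. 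The main obstacle in this argument is the $K_1$-injectivity input, namely Theorem \ref{thm:K1injectivePart2}: everything else is bookkeeping or a literal transcription of the purely infinite case, whereas that theorem carries the full weight of strict comparison and the finiteness of extreme traces through the technical Lemma \ref{lem:TechnicalLemma}. A secondary but routine point to verify is the descent from the stable proper asymptotic equivalence of Dadarlat--Eilers to genuine asymptotic equivalence of $\phi$ and $\psi$, which is precisely where the absorbing property secured in the first step gets used.
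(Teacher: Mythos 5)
Your proposal is correct and matches the paper's proof essentially verbatim: the paper likewise notes that simplicity of $\A$ makes $\phi,\psi$ norm full, invokes the CFP (via strict comparison) and Theorem \ref{thm:KucerovskyNgAbsorption} to get unital absorption, and then runs the proof of Theorem \ref{thm:Uniqueness0} with Theorem \ref{thm:K1injectivePart2} replacing Theorem \ref{thm:K1injective}. Your identification of the $K_1$-injectivity step as the point where the new hypotheses enter is exactly right.
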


\begin{proof}
Note that since $\A$ is simple, $\phi$ and $\psi$ are both
norm full extension.
Hence, since $\B$ has the CFP, it follows, by Theorem
\ref{thm:KucerovskyNgAbsorption}, that
$\phi$ and $\psi$ are both unitally absorbing extensions.

The rest of the proof is exactly the same as that of Theorem
\ref{thm:Uniqueness0}, except that Theorem \ref{thm:K1injective} is
replaced with Theorem \ref{thm:K1injectivePart2}.
\end{proof}

We note once more, that, as in Theorem \ref{thm:BDF}, Theorems \ref{thm:Uniqueness0}
and \ref{thm:Uniqueness0} are essentially about local phenomena.

Towards more concrete generalizations, we first need a
technical result.

\begin{lem}
If $\B$ is a nonunital C*-algebra and
$P, Q \in \Mul(\B)$ are projections with $P- Q \in \B$ and $\| P - Q \| < 1$,
then there exists a unitary $U \in 1 + \B$ such that $P = UQU^*$.

Moreover, we can choose
$U$ as above so that $\| U - 1 \| \leq 4 \| P - Q \|$.
\label{lem:CloseKKProj}
\end{lem}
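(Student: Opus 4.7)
The plan is to use the classical Halmos/Berberian construction: form the element $Z = PQ + (1-P)(1-Q) \in \Mul(\B)$, verify that it is invertible and intertwines $Q$ and $P$, and then take $U$ to be the unitary from its polar decomposition. First I would check that $Z - 1 = -P(1-Q) - (1-P)Q$ can be rewritten as $-(P-Q)(1-Q) + (1-P)(P-Q)$, which simultaneously shows $Z - 1 \in \B$ (since $P-Q \in \B$) and yields the crude bound $\|Z - 1\| \leq 2\|P-Q\|$. The intertwining $ZQ = PQ = PZ$ is immediate from the definition.

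The key computation is that $Z^*Z = QPQ + (1-Q)(1-P)(1-Q)$ collapses to $1 - (P-Q)^2$, and by the same argument with $P$ and $Q$ interchanged, $ZZ^* = 1 - (P-Q)^2$ as well (so $Z$ is actually normal). Since $\|P-Q\| < 1$, this forces $\|(P-Q)^2\| < 1$, so $Z^*Z$ is positive and invertible in $\Mul(\B)$. I would also note the purely algebraic identity $Q(P-Q)^2 = Q - QPQ = (P-Q)^2 Q$, so $(P-Q)^2$ commutes with $Q$, and hence so does the functional calculus element $(Z^*Z)^{-1/2}$.

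With these facts in hand, set $U \coloneqq Z(Z^*Z)^{-1/2}$. The identities $U^*U = UU^* = 1$ are immediate from $Z^*Z = ZZ^*$ being invertible. The commutation of $Q$ with $(Z^*Z)^{-1/2}$ gives $UQ = ZQ(Z^*Z)^{-1/2} = PZ(Z^*Z)^{-1/2} = PU$, so $UQU^* = P$. For membership in $1+\B$: $Z \in 1 + \B$ is already established, and applying continuous functional calculus with the function $t \mapsto (1-t)^{-1/2} - 1$ (continuous on $[0, \|(P-Q)^2\|]$ and vanishing at $0$) to $(P-Q)^2 \in \B$ shows $(Z^*Z)^{-1/2} - 1 \in \B$; multiplying two elements of $1+\B$ stays in $1+\B$.

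The \emph{main obstacle}, such as it is, is the sharper norm estimate. Writing $U - 1 = (Z - (Z^*Z)^{1/2})(Z^*Z)^{-1/2}$ and setting $\delta \coloneqq \|P-Q\|$, I would use $\|Z-1\| \leq 2\delta$, the elementary bound $\|(Z^*Z)^{1/2} - 1\| = 1 - \sqrt{1-\delta^2} \leq \delta^2$, and $\|(Z^*Z)^{-1/2}\| = (1-\delta^2)^{-1/2}$ to obtain
\[
\|U - 1\| \leq \frac{2\delta + \delta^2}{\sqrt{1-\delta^2}}.
\]
For $\delta \leq 1/2$ this is bounded above by $(5\delta/2)(2/\sqrt{3}) < 3\delta$, while for $\delta \geq 1/2$ the trivial estimate $\|U - 1\| \leq 2 \leq 4\delta$ suffices; combining both regimes yields $\|U - 1\| \leq 4\|P - Q\|$, giving the desired ``moreover'' clause with some slack in the constant.
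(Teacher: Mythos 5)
Your proposal is correct and takes essentially the same approach as the paper: the element $Z = PQ + (1-P)(1-Q)$, its invertibility from $\|P-Q\|<1$, the unitary $U$ from its polar decomposition intertwining $Q$ and $P$, and $Z \in 1+\B$ forcing $U \in 1+\B$. The only difference is in bookkeeping for the norm bound: you use the exact identity $Z^*Z = 1-(P-Q)^2$ plus a case split at $\|P-Q\|=\tfrac12$, while the paper estimates $\| |Z|-1\| \leq \| |Z|^2-1\| \leq 3\|P-Q\|$ directly; both yield the stated $\|U-1\| \leq 4\|P-Q\|$.
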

\begin{proof}
Brief sketch of standard argument:  $Z = PQ + (1-P)(1-Q)$ satisfies $Z - 1  =
(1-2P) (P - Q)$,
and thus $\|Z - 1 \|  < 1$.  Hence, $Z$ is invertible and if $U$ is
the unitary in the polar decomposition of $Z$, then $U Q U^* = P$.
Moroever, since $P- Q \in \B$, $Z \in 1 + \B$ and hence, $U \in 1 + \B$.

Also, $\| Z^* Z - 1 \| \leq \|Z^* Z - Z \| + \| Z - 1 \|
\leq \| Z^* - 1 \| \| Z \| + \| Z - 1 \| \leq 3 \| Z - 1\| =
3 \| P - Q \|$.
So $\| |Z| - 1 \| \leq \| |Z|^2 - 1 \| \leq 3 \| P - Q \|$.
So
$\|U - 1 \| \leq \| U  - U |Z| \| + \| Z - 1 \|
= \| 1 - |Z| \| + \| P - Q \| \leq 4 \| P - Q \|$.

\end{proof}

We now move towards a more concrete generalization of the BDF essential codimension
theorem.  We will be using the notion of generalized essential codimension
in Definition \ref{df:generalizedessentialcodimension}.

\begin{thm}
Let $\B$ be a separable stable simple purely infinite
 C*-algebra, and $P, Q \in \Mul(\B)$ projections
such that $P, Q, 1-P, 1-Q \notin \B$,  and
$P - Q \in \B$.

Then $[P:Q] = 0$ in $K_0(\B)$
if and only if there exists a unitary $U \in 1 + \B$ such that
$U P U^* = Q$.
\label{thm:BDFQuarterway}
\end{thm}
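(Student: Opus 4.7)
The ``if'' direction is immediate: if $U \in 1 + \B$ satisfies $UPU^* = Q$, then the *-homomorphisms $\phi, \psi : \mathbb{C} \to \Mul(\B)$ defined by $\phi(1) = P$ and $\psi(1) = Q$ are related by $\psi = \operatorname{Ad}(U) \circ \phi$, so the constant path $u_t := U$ witnesses $\phi \underline{\approx} \psi$, forcing $[P:Q] = [\phi, \psi] = 0$ in $K_0(\B)$.

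For the converse, the plan is to upgrade $\phi, \psi$ to \emph{unital} essential extensions so that Theorem~\ref{thm:Uniqueness0} applies. Define $\tilde{\phi}, \tilde{\psi} : \mathbb{C}^2 \to \Mul(\B)$ by
\[
\tilde{\phi}(z_1, z_2) := z_1 P + z_2(1 - P), \qquad \tilde{\psi}(z_1, z_2) := z_1 Q + z_2(1 - Q).
\]
These are unital *-homomorphisms, and the hypothesis that $P, Q, 1-P, 1-Q \notin \B$ ensures that $\pi \circ \tilde{\phi}$ and $\pi \circ \tilde{\psi}$ are injective, so both extensions are essential. Since $\tilde{\phi}(a) - \tilde{\psi}(a)$ reduces in each coordinate to a scalar multiple of $P - Q \in \B$, it lies in $\B$ for every $a \in \mathbb{C}^2$. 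Under the natural isomorphism $KK(\mathbb{C}^2, \B) \cong K_0(\B) \oplus K_0(\B)$ induced by the two minimal projections of $\mathbb{C}^2$, the class $[\tilde{\phi}, \tilde{\psi}]$ decomposes as $([P:Q], [1-P : 1-Q])$. The first coordinate vanishes by hypothesis; to handle the second, pull back along the unital embedding $\iota : \mathbb{C} \hookrightarrow \mathbb{C}^2$, $\lambda \mapsto (\lambda, \lambda)$. The induced map $\iota^* : KK(\mathbb{C}^2, \B) \to KK(\mathbb{C}, \B)$ is addition $(x, y) \mapsto x + y$, while $\iota^*[\tilde{\phi}, \tilde{\psi}] = [\tilde{\phi} \circ \iota, \tilde{\psi} \circ \iota] = 0$ because $\tilde{\phi} \circ \iota$ and $\tilde{\psi} \circ \iota$ both equal the unital inclusion $\mathbb{C} \hookrightarrow \Mul(\B)$. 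Hence $[P:Q] + [1-P:1-Q] = 0$, so $[1-P:1-Q] = 0$ and $[\tilde{\phi}, \tilde{\psi}] = 0$ in $KK(\mathbb{C}^2, \B)$.

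Applying Theorem~\ref{thm:Uniqueness0} now yields a norm-continuous path $\{u_t\}_{t \in [0, \infty)}$ of unitaries in $\mathbb{C}1 + \B$ with $\|u_t \tilde{\psi}(a) u_t^* - \tilde{\phi}(a)\| \to 0$ for every $a \in \mathbb{C}^2$; taking $a = (1, 0)$ gives $\|u_t Q u_t^* - P\| \to 0$. Writing $u_t = \lambda_t 1 + b_t$ with $|\lambda_t| = 1$ and $b_t \in \B$ shows $[u_t, Q] = [b_t, Q] \in \B$, whence $u_t Q u_t^* - P = (u_t Q u_t^* - Q) + (Q - P) \in \B$. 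For $t$ sufficiently large, Lemma~\ref{lem:CloseKKProj} produces $V \in 1 + \B$ with $V u_t Q u_t^* V^* = P$, so $U_0 := V u_t \in \mathbb{C}1 + \B$ satisfies $U_0 Q U_0^* = P$. Writing $U_0 = \mu 1 + c$ with $|\mu| = 1$ and $c \in \B$, the unitary $U := \mu U_0^* = 1 + \mu c^* \in 1 + \B$ then satisfies $UPU^* = Q$. The main obstacle is the KK computation in the middle paragraph --- identifying both coordinates of $[\tilde{\phi}, \tilde{\psi}]$ and arguing their sum vanishes --- after which Theorem~\ref{thm:Uniqueness0} and Lemma~\ref{lem:CloseKKProj} do all the heavy lifting.
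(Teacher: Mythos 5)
Your proof is correct, but it takes a genuinely different route through Theorem~\ref{thm:Uniqueness0} than the paper does. The paper applies that theorem directly to the nonunital extensions $\phi,\psi:\mathbb C\to\Mul(\B)$ with $\phi(1)=P$, $\psi(1)=Q$: the hypotheses $P,Q,1-P,1-Q\notin\B$ together with $\B$ being stable simple purely infinite force $P\sim 1-P\sim Q\sim 1-Q\sim 1$, so $\phi$ and $\psi$ both absorb the zero extension and the ``absorb-zero'' branch of Theorem~\ref{thm:Uniqueness0} applies immediately with no detour. You instead unitize to $\tilde\phi,\tilde\psi:\mathbb C^2\to\Mul(\B)$ so as to invoke the ``unital'' branch, which then obligates you to verify $[\tilde\phi,\tilde\psi]=0$ in $KK(\mathbb C^2,\B)$. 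Your argument for that --- decomposing the class as $([P:Q],[1-P:1-Q])$ and pulling back along the diagonal $\mathbb C\hookrightarrow\mathbb C^2$ to see the sum vanishes --- is a nice self-contained computation, and in fact records explicitly the useful fact that $[1-P:1-Q]=-[P:Q]$ always holds in this setting. The trade-off: the paper's route is shorter and makes visible exactly where the hypothesis $1-P,1-Q\notin\B$ is used (to get proper infiniteness of the complementary projections and hence absorption of $0$), whereas yours avoids having to reason about absorption of the zero extension at the cost of an auxiliary $KK$-bookkeeping step and the final $\mu U_0^*$ normalization to flip the unitary's direction and land it in $1+\B$. Both are legitimate, and your endgame via Lemma~\ref{lem:CloseKKProj} matches the paper's.
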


\begin{proof}
Since $\B$ is simple purely infinite, it follows, from the hypotheses,
that $P \sim 1 - P \sim Q \sim 1 - Q \sim 1$.
Let $\phi, \psi : \mathbb{C} \rightarrow \Mul(\B)$ be *-homomorphisms such that
$\phi(1) = P$ and $\psi(1) = Q$. Then
$\phi$ and $\psi$ are absorbing trivial extensions.
(And both absorb the zero extension.)

The ``if'' direction then follows immediately from Theorem \ref{thm:Uniqueness0}.
(See also \cite{LeeJFA2013} Lemma 2.4.)

We now prove the ``only if'' direction.
We have that $[\phi, \psi] = [P:Q] = 0$.
Hence, by Theorem \ref{thm:Uniqueness0}, there exists a norm continuous path
$\{ u_t \}_{t \in [0,1]}$ of unitaries in $\mathbb{C}1 + \B$ such that
$\| u_t P u_t^* - Q \| \rightarrow 0$ as $t \rightarrow \infty$.

Choose $s \in [0, \infty)$ such that
$\| u_s P u_s^* - Q \| < 1$.
We may assume that $u_s \in 1 + \B$.
Then, by Lemma \ref{lem:CloseKKProj}, there exists
a unitary $V \in 1 + \B$ such that
$V u_s P u_s^* V^* = Q$.
Take $U =_{df} V u_s$.
\end{proof}

We note that there is a mistake in \cite{LeeFirst}
Theorem 2.14.  It is \emph{not true} that if $\B$ is a separable simple
stable purely infinite C*-algebra for which $\Mul(\B)$ has real
rank zero, and if $P, Q \in \Mul(\B)$ are projections
with $P - Q \in \B$, $P \notin \B$, for which $[P, Q] = 0$ in
$K_0(\B)$ then there exists a unitary $U \in 1 + \B$ such that
$U P U^* = Q$.  Here is a counterexample:  Take $\B = O_2 \otimes \K$
and let $r \in O_2 \otimes \K$ be a nonzero projection.
Note that $[r] = 0$ in $K_0(O_2)$.  Let
$P =_{df} 1_{\Mul(O_2 \otimes \K)}$ and $Q = P - r$.
Then $P - Q = r \in O_2 \otimes \K$, $P \notin O_2 \otimes \K$,
and $[P:Q] = 0$ in $K_0(O_2 \otimes \K)$.  But it is \emph{not true}
that $P$ is unitarily equivalent to $Q$.

The mistake in the argument of \cite{LeeFirst} Theorem 2.14
is
essentially a mistake about absorbing extensions.
If $\phi : \A \rightarrow \Mul(\B)$ is an absorbing extension
then $\phi \oplus 0 \sim \phi$, i.e., $\phi$ must absorb the $0$
extension, and thus $ran(\phi)^{\perp}$ must be big.
(Of course, this must be separated from the unital case
where $\phi(1) = 1$ and $\phi$ is \emph{unitally absorbing} --
meaning absorbing all \emph{strongly unital} trivial extensions.)

Finally, we note that in a separate paper, where we also investigate
the relationship between essential codimension and projection lifting,
we will look more extensively at concrete generalizations of the
BDF essential codimension result, as in the above.

\section{Technical lemma}
\label{sec:TechnicalLemma}

For $\delta > 0$,
let $f_{\delta} : [0, \infty) \rightarrow [0,1]$ be the unique continuous function for which
\[
f_{\delta}(t)
=
\begin{cases}
1 & t \in [\delta, \infty) \\
0 & t = 0\\
\makebox{linear on  } & [0, \delta].
\end{cases}
\]

If $\C$ is a unital C*-algebra and $p \in \C$ is a projection, we follow standard
convention by letting $p^{\perp} =_{df} 1 - p$.

In what follows, for elements $a,b$ in a C*-algebra, we use $a \approx_{\epsilon} b$ to denote $\Vert a-b \Vert < \epsilon$.





\begin{lem}
Let $\B$ be a separable stable C*-algebra with an approximate
unit $\{ e_n \}$ consisting of increasing projections.
(We define $e_0 =_{df} 0$.)

Suppose that $A, A', A'' \in \C(\B)_+$ are contractive elements and $\delta > 0$
such that
$$A A' = A'$$
and
$$A'' \in her((A' - \delta)_+).$$

Let $A_0 \in \Mul(\B)$ be any contractive lift of $A$, and let $\epsilon > 0$ be
given.

Then for every $M \geq 0$, there exists an $A''_0 \in
e_M^{\perp}\Mul(\B) e_M^{\perp}$ which is a contractive positive
lift of $A''$ such that for all $l \geq 1$,
$$A_0 (A''_0)^{1/l} \approx_{\epsilon} A''_0 \approx_{\epsilon} (A''_0)^{1/l} A_0.$$
\label{lem:Oct1920186AM}
\end{lem}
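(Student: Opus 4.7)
The plan is to compress an arbitrary contractive positive lift of $A''$ by a carefully chosen element of $\B$ drawn from the given approximate unit $(e_n)$, exploiting the hereditary relation $A'' \in \mathrm{her}((A'-\delta)_+)$ together with the identity $AA' = A'$. The first step is to establish the underlying corona identities: since $AA' = A'$ with $A, A'$ positive and contractive, $A$ and $A'$ commute and $A$ acts as the identity on the closed range of $A'$; hence $A \cdot g(A') = g(A') = g(A') \cdot A$ in $\C(\B)$ for every continuous $g$ with $g(0) = 0$. Applied to the approximate unit $\{(A'-\delta)_+^{1/n}\}_n$ of the hereditary subalgebra $\mathrm{her}((A'-\delta)_+)$, which contains $A''$ and all its positive powers $(A'')^{1/l}$, passage to the norm-limit yields $A \cdot (A'')^{1/l} = (A'')^{1/l} \cdot A = (A'')^{1/l}$ for every $l \geq 1$. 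Consequently, for any contractive positive lift $\widetilde{A''_0} \in \Mul(\B)$ of $A''$, the differences $b_l := A_0\,\widetilde{A''_0}^{1/l} - \widetilde{A''_0}^{1/l}$ all lie in $\B$.

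Next I would construct the lift. Fix contractive positive lifts $A'_0$ of $A'$ and $\widetilde{A''_0}$ of $A''$ in $\Mul(\B)$, and set $C_0 := (A'_0 - \delta)_+$. Applying the Arveson convex-combinations technique to the separable set $\{A_0, \widetilde{A''_0}, C_0\}$ while drawing only from $(e_n)$ with indices $\geq M$, I would produce an element $h \in \B$ with $h \geq e_M$ that approximately commutes, to any prescribed tolerance, with each of $A_0$, $\widetilde{A''_0}$, and $C_0$. Define
\[ A''_0 := (1-h)^{1/2}\,\widetilde{A''_0}\,(1-h)^{1/2}. \]
Since $h \in \B$ and $(1-h)e_M = 0$ (from $h \geq e_M$), one checks directly that $\pi(A''_0) = A''$ and $A''_0 \in e_M^{\perp}\,\Mul(\B)\,e_M^{\perp}$, as required.

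The verification is the main technical hurdle. For $l = 1$, combining the quasicentrality of $h$ with the standard inequality $\|(1-h)^{1/2}\,b_1\|^2 \leq \|(1-h)\,b_1\|\,\|b_1\|$ and the approximate-unit property of $h$ in $\B$ yields the desired $\epsilon$-estimate once $h$ is taken sufficiently high. Extending this bound uniformly for all $l \geq 1$ is the delicate point, since naive term-by-term estimates degenerate as $l \to \infty$. The plan is to exploit the fact that $(e_n)$ consists of \emph{projections}: by arranging that $h$ itself is essentially a projection (drawn from $(e_n)$ up to a convex-combination adjustment that preserves the key inequalities), one has $(1-h)^{1/l} = (1-h)$ identically in $l$, so that the exponent cannot spoil the norm estimate. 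One then bounds the whole family $\{b_l\}_{l \geq 1}$ at once using the corona identities from the first step together with the hereditary dominance $A'' \in \mathrm{her}((A'-\delta)_+)$, which forces the spectral support of each $b_l$ to sit inside a manageable common hereditary subalgebra of $\B$. Controlling this uniformity in $l$ is where the bulk of the technical work will be concentrated.
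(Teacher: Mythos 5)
Your opening step is fine and matches the paper: from $AA'=A'$ one gets $A\,(A'')^{1/l}=(A'')^{1/l}=(A'')^{1/l}A$ in $\C(\B)$, so each $b_l:=A_0\widetilde{A''_0}^{1/l}-\widetilde{A''_0}^{1/l}$ lies in $\B$, and your $l=1$ estimate would go through. The genuine gap is exactly at the point you flag, and your proposed fix does not repair it: arranging $h$ to be (essentially) a projection so that $(1-h)^{1/l}=1-h$ addresses the wrong object, because the $l$-dependence lives in $(A''_0)^{1/l}=\bigl((1-h)^{1/2}\widetilde{A''_0}(1-h)^{1/2}\bigr)^{1/l}$, which has no tractable relation to $(1-h)$ and $\widetilde{A''_0}^{1/l}$; moreover the family $\{b_l\}$ is not norm-precompact (the roots $\widetilde{A''_0}^{1/l}$ need not converge in norm), so no single $h$ can nearly annihilate all of them. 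Worse, compressing an \emph{arbitrary} lift cannot work at all. Take $\B=\K$, $A=A'=A''=\pi(P_0)$ for a projection $P_0$ of infinite rank and corank, $A_0=P_0$, and $\widetilde{A''_0}=P_0+D_0$, where $D_0$ is a positive compact contraction supported on $1-P_0$ with strictly positive eigenvalues $d_j\to 0$. All hypotheses of the lemma hold (with any $0<\delta<1$), but for every cut-down --- by $e_N^{\perp}$ or by $(1-h)^{1/2}$ with $h\in\B$ --- the compressed lift retains almost-eigenvalues $d_j\in(0,\delta)$ in directions orthogonal to the range of $P_0$, and taking $l$-th roots inflates them toward $1$, so $\limsup_{l}\|A_0(A''_0)^{1/l}-(A''_0)^{1/l}\|=1$ no matter how high the compression is taken. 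So the conclusion depends essentially on \emph{which} lift of $A''$ is chosen, not merely on compressing a fixed one; a symptom of the missing idea is that your $C_0=(A'_0-\delta)_+$, i.e.\ the $\delta$-gap hypothesis, never actually enters your estimates.

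The paper's proof supplies precisely this missing ingredient. Fix a contractive positive lift $A'_0\in e_M^{\perp}\Mul(\B)e_M^{\perp}$ of $A'$; since $A_0A_0'^{1/2}-A_0'^{1/2}$ and $A_0'^{1/2}A_0-A_0'^{1/2}$ lie in $\B$, for $N$ large the element $D:=A_0'^{1/2}e_N^{\perp}A_0'^{1/2}$ satisfies $A_0D\approx_{\delta_1}D\approx_{\delta_1}DA_0$, whence (choosing $\delta_1$ by polynomial approximation of $f_\delta$) $A_0f_{\delta}(D)\approx_{\epsilon}f_{\delta}(D)\approx_{\epsilon}f_{\delta}(D)A_0$. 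Since $\pi(D)=A'$, the hereditary subalgebra $\overline{(D-\delta)_+\Mul(\B)(D-\delta)_+}$ maps under $\pi$ onto $her((A'-\delta)_+)$, so one may choose the lift $A''_0$ of $A''$ \emph{inside} it; it then automatically lies in $e_M^{\perp}\Mul(\B)e_M^{\perp}$, and $f_{\delta}(D)$ is an \emph{exact} unit for this hereditary subalgebra, so $f_{\delta}(D)(A''_0)^{1/l}=(A''_0)^{1/l}$ for every $l\geq 1$. The single estimate on $A_0f_{\delta}(D)$ then yields $A_0(A''_0)^{1/l}=A_0f_{\delta}(D)(A''_0)^{1/l}\approx_{\epsilon}(A''_0)^{1/l}$, and similarly on the other side, uniformly in $l$. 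If you wish to salvage your write-up, the quasicentral machinery is not the issue; what must change is the choice of lift, routed through a multiplier-level hereditary subalgebra sitting over $her((A'-\delta)_+)$ whose exact unit is almost fixed by $A_0$.
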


\begin{proof}
Choose $\delta_1 > 0$ such that for any contractive operators $B, C$, with
$C \geq 0$, if
$$BC \approx_{\delta_1} C \approx_{\delta_1} CB$$
then
$$B f_{\delta}(C) \approx_{\epsilon} f_{\delta}(C) \approx_{\epsilon}
f_{\delta}(C) B.$$
(Sketch of argument for choosing $\delta_1$:  By the Weierstrass approximation theorem,
find a polynomial $p(t)$, with $p(0) = 0$, such that $|f_{\delta}(t) - p(t) | < \epsilon/2$ for all
$t \in [0,1]$. Now use the concrete structure of $p(t)$ to determine $\delta_1$.)

Let $A'_0 \in e_M^{\perp}\Mul(\B)e_M^{\perp}$ be any contractive positive lift of $A'$.
Note that we can restrict to the corner $e_M^{\perp}$ because the image of $\pi(e_M^{\perp}) = 1$ since $e_M \in \Mul(\B)$.
Because $AA' = A'$ (and since they are positive, we also have $A' = A'A$) it follows that $AA'^{1/2} = A'^{1/2}$ and so also $A'^{1/2}A = A'^{1/2}$.
Therefore, there exist $c, c' \in \B$ for which
$$A_0 {A'_0}^{1/2} = {A'_0}^{1/2} + c,$$
and
$${A'_0}^{1/2} A_0 = {A'_0}^{1/2} + c'.$$
Since $\{e_n \}$ is an approximate identity for $\B$, we can choose $N \geq 1$ so that
$$c e_N^{\perp} \approx_{\delta_1} 0 \approx_{\delta_1} e_N^{\perp} c'.$$
Then, combining the above displays yields
$$A_0 {A'_0}^{1/2} e_N^{\perp} \approx_{\delta_1} {A'_0}^{1/2} e_N^{\perp},$$
and
$$e_N^{\perp} {A'_0}^{1/2} A_0 \approx_{\delta_1} e_N^{\perp} {A'_0}^{1/2}.$$

Hence,
if we define
$$D =_{df} {A'_0}^{1/2} e_N^{\perp}{A'_0}^{1/2}$$
then
$$A_0 D \approx_{\delta_1} D \approx_{\delta_1} D A_0.$$

Hence, by the definition of $\delta_1$,
\begin{equation}
A_0 f_{\delta}(D) \approx_{\epsilon} f_{\delta}(D)
\approx_{\epsilon}  f_{\delta}(D) A_0.\label{eq:fdeltaD-almost-comm-A0}
\end{equation}

Note that $\pi(D) = A'$, which follows since $\pi(e_N^{\perp}) = 1$.
Because the algebra $\pi(\overline{(D - \delta)_+ \Mul(\B) (D - \delta)_+}) = her((A' - \delta)_+)$, we can find
a contractive positive lift $A''_0 \in \overline{(D - \delta)_+ \Mul(\B) (D - \delta)_+}$
of $A''$.
Note that $A''_0 \in e_M^{\perp} \Mul(\B) e_M^{\perp}$ since $A'_0$, and consequently, $D$ and $(D-\delta)_+$ are.

We remark that $f_{\delta}(D) (D-\delta)_+ = (D-\delta)_+$, and for all $l \geq 1$, $A''^{1/l}_0$ is a contraction.
Combining these facts with \eqref{eq:fdeltaD-almost-comm-A0} we obtain
$$A_0 (A''_0)^{1/l} = A_0 f_{\delta}(D) (A''_0)^{1/l}
\approx_{\epsilon} f_{\delta}(D) (A''_0)^{1/l} = (A''_0)^{1/l}.$$
Similarly,
\[(A''_0)^{1/l} A_0 \approx_{\epsilon} (A''_0)^{1/l}. \qedhere\]
\end{proof}

\vspace*{2ex}

\textbf{We now
fix some notation which will be used for the rest of this section.}

\vspace*{1ex}

Let $\B$ be a separable simple stable C*-algebra with a nonzero projection.
Let $\{ p_k \}_{k=1}^{\infty}$ be a sequence of pairwise orthogonal
projections of $\B$
such that
$$\sum_{k=1}^{\infty} p_k = 1_{\Mul(\B)}.$$
where the series converges strictly.

For all $m \leq n$, let
$$p_{m, n}  =_{df} \sum_{k=m}^n p_n$$
and
let
$$e_n =_{df} \sum_{k=1}^n p_n.$$
(Hence, $\{ e_n \}$ is an approximate unit for $\B$.)

Let $U \in \C(\B)$ be a unitary and
let $V \in \Mul(\B)$ be a partial isometry such that
$$\pi(V) = U.$$

Also, we let $\overline{B(0,1)}$ denote the closed unit ball of the
complex plane, i.e., $\overline{B(0,1)} =_{df} \{ \alpha \in
\mathbb{C} : |\alpha| \leq 1 \}$.

Recall also that for a  C*-algebra $\C$, for a
$\tau \in T(\C)$ and for any $a \in \C_+$,
$$d_{\tau}(a) =_{df} \lim_{n\rightarrow \infty} \tau(a^{1/n}).$$

\begin{lem}

Let $h_1, h_{2}, h_{3}: S^1 \rightarrow [0,1]$ be continuous functions and
let $\delta_1 > 0$ be such that
$$h_1 h_{2} = h_{2}$$
and
$$\overline{\supp(h_{3})} \subset \supp((h_{2} - \delta_1)_+).$$

Let $\delta_2 > 0$ and $\widehat{h}$ be a complex polynomial such that
$$|\widehat{h}(\lambda) - h_1(\lambda) | < \frac{\delta_2}{10}$$
for all $\lambda \in S^1$.

Then for every $L, L'  \geq 1$, there exist  $L_1 > L'$,
there exist contractive $A \in e_{L_1}^{\perp} \Mul(\B)_+ e_{L_1}^{\perp}$ which is a lift of
$h_{3}(U)$ such that
for every contractive $a \in (\overline{A \B A})_+$
there exist $M > L$, $M_1 > L_1$ and
$x \in  p_{L_1 + 1, M_1}\B p_{L_1 +1, M_1}$ for which
$$x \widehat{h} \left( \sum_{j=1}^{\infty} \alpha_j p_j V \right) x^*
\approx_{\delta_2} a$$
for every sequence $\{ \alpha_j \}$  in $\overline{B(0,1)}$ (closed unit
ball of the complex plane) such
that $\alpha_j = 1$ for all $L \leq j \leq M$.
\label{lem:Oct2020186AM}
\end{lem}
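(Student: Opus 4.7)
The plan is to invoke Lemma~\ref{lem:Oct1920186AM} with the identifications $A \leftarrow h_1(U)$, $A' \leftarrow h_2(U)$, $A'' \leftarrow h_3(U)$, $\delta \leftarrow \delta_1$. Both hypotheses are satisfied: $AA' = A'$ follows from $h_1 h_2 = h_2$ by continuous functional calculus on $U$, and $A'' \in \operatorname{her}((A'-\delta)_+)$ is a restatement of the support assumption. I fix a contractive positive lift $A_0 \in \Mul(\B)$ of $h_1(U)$, and since $\pi(\widehat{h}(V) - A_0) = \widehat{h}(U) - h_1(U)$ has $\C(\B)$-norm less than $\delta_2/10$, I write $\widehat{h}(V) = A_0 + b + r$ with $b \in \B$ and $\|r\| < \delta_2/10$. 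Let $\eta>0$ be a small parameter, fixed at the end. I pick $N$ so that $\|e_N^\perp b\|, \|e_N^\perp V^* e_{L-1}\|, \|e_N^\perp V e_{L-1}\| < \eta$ and then $L_1 > \max(L', L-1, N)$ so that $\|e_{L-1} V e_{L_1}^\perp\|, \|e_{L-1} V^* e_{L_1}^\perp\|, \|e_N V e_{L_1}^\perp\| < \eta$ (all possible, since the listed elements lie in $\B$ and $\{e_n\}$ is an approximate unit for $\B$). Finally I apply Lemma~\ref{lem:Oct1920186AM} with parameter $M = L_1$ and a small $\epsilon > 0$ to obtain a contractive positive $A \in e_{L_1}^\perp \Mul(\B) e_{L_1}^\perp$ lifting $h_3(U)$ with $A_0 A^{1/l} \approx_\epsilon A^{1/l} \approx_\epsilon A^{1/l} A_0$ for every $l$. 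A limiting argument (using that $A^{1/l}$ is an approximate unit on $\overline{A\B A}$) upgrades this to $A_0 z \approx z \approx z A_0$ within $\epsilon\|z\|$ for every $z \in \overline{A\B A}$.

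Given a contractive $a \in (\overline{A \B A})_+$, note that $a^{1/2} \in \overline{A\B A} \subseteq e_{L_1}^\perp \B e_{L_1}^\perp$, and since $\{p_{L_1+1, M}\}_M$ is an approximate unit for $e_{L_1}^\perp \B e_{L_1}^\perp$, for $M_1 > L_1$ large we have $a^{1/2} \approx p_{L_1+1, M_1}\, a^{1/2}\, p_{L_1+1, M_1}$. I set $x$ equal to this truncation, so $x \in p_{L_1+1, M_1}\B p_{L_1+1, M_1}$ with $x \approx a^{1/2}$ and hence $xx^* \approx a$. The target $x\widehat{h}(V_\alpha) x^* \approx_{\delta_2} a$ is then deduced from
\[
x\widehat{h}(V_\alpha) x^* \;\stackrel{\text{(i)}}{\approx}\; x\widehat{h}(V) x^* \;\stackrel{\text{(ii)}}{\approx}\; x A_0 x^* \;\stackrel{\text{(iii)}}{\approx}\; xx^*,
\]
with each step contributing at most $\delta_2/3$. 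Step (iii) follows from $xA_0 \approx x$, which uses $x \approx a^{1/2}$ and $a^{1/2} A_0 \approx a^{1/2}$ (the upgraded near-commutation). For step (ii), $x(b+r)x^* = xe_N^\perp b x^* + xrx^*$ because $L_1 > N$ forces $p_{L_1+1, M_1}\,e_N = 0$ and hence $xe_N b x^* = 0$; then $\|xe_N^\perp b x^*\| \leq \|e_N^\perp b\| < \eta$ and $\|xrx^*\| \leq \|r\| < \delta_2/10$.

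The heart of the matter is step (i), namely $x\widehat{h}(V_\alpha) x^* \approx x\widehat{h}(V) x^*$ \emph{uniformly} over admissible $\alpha$. Writing $\widehat{h}(V_\alpha) - \widehat{h}(V) = \sum_k c_k(V_\alpha^k - V^k)$ with $V_\alpha = D_\alpha V$, the telescoping identity
\[
V_\alpha^k - V^k = \sum_{j=0}^{k-1} V_\alpha^j (D_\alpha - 1) V \cdot V^{k-1-j}
\]
combined with $x(D_\alpha - 1) = 0$ (valid because $\alpha_j = 1$ on $[L_1+1, M_1] \subseteq [L,M]$) annihilates the $j = 0$ summand. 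For $j \geq 1$, the factorization $D_\alpha - 1 = (D_\alpha - 1)(e_{L-1} + e_M^\perp)$ of norm at most $2$ reduces each summand (after using $\|x V_\alpha^j\| \leq 1$) to an estimate of the form $\|e_{L-1} V^{k-j} x^*\| + \|e_M^\perp V^{k-j} x^*\|$. The $e_M^\perp$ piece becomes small by choosing $M$ large after $x$ is fixed, since $V^{k-j}x^* \in \B$. The $e_{L-1}$ piece vanishes for $k = j$ and is at most $\|e_{L-1} V e_{L_1}^\perp\| < \eta$ for $k-j = 1$; for $k-j \geq 2$ it is handled inductively by passing to the adjoint $\|x V^{*(k-j)} e_{L-1}\|$ and inserting resolutions $e_{N'} + e_{N'}^\perp$ (for appropriate $N' \in \{N, L_1\}$) between successive $V^*$'s, using the smallness of $\|e_{L_1}^\perp V^* e_{N}\|$ and $\|e_N^\perp V^* e_{L-1}\|$ arranged in the first paragraph. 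Summing over $k \leq \deg(\widehat{h})$ with $|c_k| \leq 1 + \delta_2/10$, the total step-(i) error is controlled by a polynomial in $\eta$; choosing $\eta$ sufficiently small at the outset keeps it under $\delta_2/3$ and completes the proof.
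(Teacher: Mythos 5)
Your strategy is essentially the paper's: you instantiate Lemma~\ref{lem:Oct1920186AMq} --- sorry, Lemma~\ref{lem:Oct1920186AM} --- with $h_1(U), h_2(U), h_3(U), \delta_1$ exactly as the paper does, compare $\widehat{h}(V)$ with a lift $A_0$ of $h_1(U)$ modulo $\B$ plus a $\delta_2/10$-small remainder, use the near-commutation $A_0A^{1/l}\approx_\epsilon A^{1/l}$ to absorb $A_0$ into $a$, and take $x$ to be the truncation $p_{L_1+1,M_1}a^{1/2}p_{L_1+1,M_1}$, with the head coefficients ($j<L$) handled through the choice of $L_1$ and the tail ($j>M$) by choosing $M$ large after $x$ is fixed. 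Where the paper packages the uniformity over $\{\alpha_j\}$ into a single condition on $\widehat{h}\bigl(\sum_j\alpha_jp_jV\bigr)e_{L_1}^{\perp}$ imposed when $L_1$ is chosen, you prove it by hand via the telescoping identity; that is legitimate and arguably more transparent.

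The one defect is the inductive treatment of the head terms $\|e_{L-1}V^{k-j}x^*\|$ for $k-j\ge 3$. The conditions you arranged (smallness of $\|e_N^{\perp}V^*e_{L-1}\|$, $\|e_N^{\perp}Ve_{L-1}\|$, $\|e_{L-1}Ve_{L_1}^{\perp}\|$, $\|e_{L-1}V^*e_{L_1}^{\perp}\|$, $\|e_NVe_{L_1}^{\perp}\|$) let you peel off exactly two powers of $V^*$: inserting $e_N+e_N^{\perp}$ next to $e_{L-1}$ and then $e_{L_1}+e_{L_1}^{\perp}$ next to $e_N$ leaves you with $\|xV^{*(k-j-2)}e_{L_1}\|$, and for $k-j\ge 3$ this is $\|xV^*e_{L_1}\|$ or worse, which is not controlled by anything you arranged (nothing prevents $V^*$ from carrying the range of $e_{L_1}$ into the corner $p_{L_1+1,M_1}\B p_{L_1+1,M_1}$ where $x$ lives, since $N$ and $L_1$ were fixed before $\alpha$, $a$, $x$). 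So as written the induction stalls whenever $\deg\widehat{h}\ge 4$. The repair is immediate and in fact removes the need for any induction: since $e_{L-1}V^m\in\B$ for every $m$, when choosing $L_1$ also demand $\|e_{L-1}V^me_{L_1}^{\perp}\|<\eta$ for all $1\le m\le\deg\widehat{h}$ (finitely many conditions, each achievable by the same approximate-unit principle you already invoke); then, because $x^*=e_{L_1}^{\perp}x^*$, every head term is at most $2\eta$ outright. With that adjustment your argument is complete and coincides in substance with the paper's proof.
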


\begin{proof}

Let $A_0 \in \Mul(\B)_+$ be a contractive lift of $h_1(U)$.

Since $U$ is unitary and because of the conditions on $\hat h$, we know $\hat h(U) \approx_{\frac{\delta_2}{10}} h_1(U)$.
Moreover, since $\hat h$ is a polynomial, $\hat h(U) = \hat h(\pi(V)) = \pi(\hat h(V))$ and also $h_1(V) = \pi(A_0)$.
Using these facts along with the fact that $\{e_n \}$ is an approximate identity for $\B$, we can
choose $L_1 > L'$ so that

\begin{equation}
e_{L_1}^{\perp} \widehat{h}(V) e_{L_1}^{\perp} \approx_{\frac{\delta_2}{10}}
e_{L_1}^{\perp} A_0 e_{L_1}^{\perp}\label{eq:hathV-approx-A0}
\end{equation}

and

$$\widehat{h}\left(\sum_{j=1}^{\infty} \alpha_j p_j V \right)
e_{L_1}^{\perp} \approx_{\frac{\delta_2}{10}}
\widehat{h}\left(\sum_{j=1}^{\infty} \alpha'_j p_j V \right) e_{L_1}^{\perp}$$

for all sequences $\{ \alpha_j \}$ and $\{ \alpha'_j \}$
in $\overline{B(0,1)}$ such that $\alpha_j = \alpha'_j$ for all $j \geq L$.

By Lemma \ref{lem:Oct1920186AM} (instantiated with $A, A', A'', \delta, \epsilon, M$ chosen to be $h_1(U), h_2(U),$ $h_3(U), \delta_1, \frac{\delta_2}{10}, L_1$), there exists
$A \in e_{L_1}^{\perp}\Mul(\B) e_{L_1}^{\perp}$ which is a contractive positive lift of
$h_{3}(U)$ for which
$$e_{L_1}^{\perp} A_0 e_{L_1}^{\perp} A^{1/l} \approx_{\frac{\delta_2}{10}}
A^{1/l}$$
for all $l \geq 1$.

Hence, if we let $a \in \overline{A \B A}$ be an arbitrary contractive positive
element, then because the previous display holds for all $l \ge 1$,
$$e_{L_1}^{\perp} A_0 e_{L_1}^{\perp} a^{1/2}
\approx_{\frac{\delta_2}{10}} a^{1/2}.$$
Chaining this with \eqref{eq:hathV-approx-A0} yields
$$e_{L_1}^{\perp} \widehat{h}(V) e_{L_1}^{\perp} a^{1/2}
\approx_{\frac{\delta_2}{5}} a^{1/2}.$$
Therefore,
if we let $y =_{df} a^{1/2}$ then
$$y e_{L_1}^{\perp} \widehat{h}(V) e_{L_1}^{\perp} y^*
\approx_{\frac{2\delta_2}{5}} a.$$

By the definition of $L_1$ and since $y, a \in \B$,
we can choose $M > L$ and $M_1 > L_1$ such that if
we define
$$x =_{df} p_{L_1 +1, M_1}y p_{L_1 +1, M_1}$$
then
$$x \widehat{h}(\sum_{j=1}^{\infty} \alpha_j p_j V) x^*  \approx_{\delta_2}
a$$
for every sequence $\{ \alpha_j \}$ in $\overline{B(0,1)}$ for which
$\alpha_j = 1$ for all $L \leq j \leq M$.
\end{proof}

\begin{lem}
Suppose that, in addition, $\B$ has strict comparison of positive elements
and $T(\B)$ has  finitely many extreme points.

Let $p \in \B$ be a nonzero projection and let $\epsilon > 0$ be given.

Let $h_1, h_2, h_3 : S^1 \rightarrow [0,1]$ be a continuous functions $\delta_1 > 0$ and
$\lambda_1, ..., \lambda_m \in S^1$
such that
$$h_1 h_2 = h_2$$
$$\overline{\supp(h_3)} \subset \supp((h_2 - \delta_1)_+)$$
and
the function
$$\lambda \mapsto \sum_{j=1}^m h_3(\lambda_j \lambda)$$
is a full element in $C(S^1)$.

There exists $\delta_2 > 0$ such that if
$\widehat{h}$ a complex polynomial for which
$$|\widehat{h}(\lambda) - h_1(\lambda)| < \frac{\delta_2}{10}$$
for all $\lambda \in S^1$ then the following holds:

For every $L, L' \geq 1$,  there exist
$L < L_1 < L_2 < ... < L_m$,  $L' < M < M'$,
a projection $q \in \B$ for which $q \sim p$,
and
contractive $x \in p_{M+1, M'}\B p_{M +1, M'}$ such that
$$x \widehat{h}\left(\sum_{j=1}^{\infty} \alpha_j p_j V\right) x^*
\approx_{\epsilon} q$$
where $\{ \alpha_j \}$ is any sequence in $\overline{B(0,1)}$ such that
$\alpha_j = \lambda_k$ for all $L_{k-1} < j \leq L_k$ and all $1 \leq k \leq m$.
(Here, $L_0 =_{df} L$.)
\label{lem:Oct2020187AM}
\end{lem}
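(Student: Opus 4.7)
The plan is to iterate Lemma~\ref{lem:Oct2020186AM} once for each $\lambda_{k}$, sum the resulting positive lifts into a single element of $\Mul(\B)_{+}$ that is invertible modulo $\B$ (by the fullness hypothesis), extract a projection $q\sim p$ in the associated hereditary subalgebra using strict comparison and the trace structure of $\B$, and finally realize the approximation via a localized element $x$ in a far-right corner of the approximate unit.

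First, I would fix $\gamma:=\min_{\lambda\in S^{1}}\sum_{k=1}^{m}h_{3}(\lambda_{k}\lambda)>0$ (strictly positive by the fullness hypothesis) and pick $\delta_{2}\in(0,\epsilon/(4m))$ small enough to absorb polynomial approximation errors across the $m$ iterations.  The shifted polynomial $\widehat{h}(\lambda_{k}\,\cdot\,)$ approximates $h_{1}(\lambda_{k}\,\cdot\,)$ to the same $\delta_{2}/10$-tolerance, and the rotated triples $(h_{1}(\lambda_{k}\,\cdot\,),h_{2}(\lambda_{k}\,\cdot\,),h_{3}(\lambda_{k}\,\cdot\,))$ satisfy the same relational hypotheses as $(h_{1},h_{2},h_{3})$.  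Iterating Lemma~\ref{lem:Oct2020186AM} with inputs staged so that the $k$-th rotated application places its constraint on the interval $(L_{k-1},L_{k}]$ (with $L_{0}:=L$) yields strictly increasing indices $L<L_{1}<\cdots<L_{m}$ and positive contractive lifts $A_{k}\in e_{L_{k}}^{\perp}\Mul(\B)_{+}e_{L_{k}}^{\perp}$ of $h_{3}(\lambda_{k}U)$.

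Set $A:=\sum_{k=1}^{m}A_{k}\in\Mul(\B)_{+}$; then $\pi(A)=\sum_{k}h_{3}(\lambda_{k}U)\geq\gamma\cdot 1$ in $\C(\B)$, so $A_{\star}:=(A-\gamma/2)_{+}$ has invertible image in $\C(\B)$ bounded below by $\gamma/2$.  Extending each $\tau\in T(\B)$ to $\Mul(\B)_{+}$, we have $d_{\tau}(A_{\star})=+\infty$ (as $\pi(A_{\star})$ is bounded below and $\B$ is stable), while $d_{\tau}(p)=\tau(p)<\infty$, uniformly over the finitely many extreme tracial states.  Truncating, $a_{n}:=A_{\star}e_{n}A_{\star}\in\B_{+}$ satisfies $d_{\tau_{i}}(a_{n})>\tau_{i}(p)$ at each extreme $\tau_{i}$ for $n$ sufficiently large, and by linearity of $d_{\cdot}$ in $\tau$ on elements of $\B$, the strict inequality propagates to all $\tau\in T(\B)$.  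Strict comparison then gives $p\preceq a_{n}$ in $\B$, yielding a projection $q\in\B$ with $q\sim p$ and $q\in\overline{a_{n}\B a_{n}}\subseteq\overline{A_{\star}\B A_{\star}}\subseteq\overline{A\B A}$.

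Setting $y:=q^{1/2}$, I would then choose $M>L_{m}$ and $M'>M$ large enough that $x:=p_{M+1,M'}\,y\,p_{M+1,M'}$ approximates $y$ within the tolerance set by $\delta_{2}$, and such that the sandwich $x\,\widehat{h}\bigl(\sum_{j}\alpha_{j}p_{j}V\bigr)\,x^{*}$ decomposes (up to $\delta_{2}/10$-errors) into the sum of block-wise contributions supplied by the $k$-th iterated applications (each carrying a $\widehat{h}(\lambda_{k}V)$-factor), which in turn sum to $y\,A_{\star}\,y\approx q$.  The main obstacle will be this last chaining: one must verify that the single localized $x$ extracts all $m$ rotated contributions simultaneously, which requires choosing the $L_{k}$'s and $M$ with enough separation that the spread of the finite-degree polynomial in $V$ between the support of $x$ and each block $(L_{k-1},L_{k}]$ is controlled, so that cross-terms between distinct blocks are negligible and the telescoped sum recovers $A_{\star}$ (and hence $q$) to within the allowed tolerance.
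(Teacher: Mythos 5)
Your first half runs parallel to the paper: rotate the triple $(h_1,h_2,h_3)$ by each $\lambda_k$, apply Lemma~\ref{lem:Oct2020186AM} once per rotation, use fullness of $\sum_k h_3(\lambda_k\cdot)$ to get tracial largeness, and use strict comparison together with the finitely many extreme traces to produce $q\sim p$. The genuine gap is in the final step, which you yourself flag as ``the main obstacle'' but do not close, and the specific claims you make there do not hold. First, $yA_\star y=q^{1/2}(A-\gamma/2)_+q^{1/2}$ is not close to $q$: membership $q\in\overline{A_\star\B A_\star}$ only says $A_\star$ does not annihilate $q$, not that it acts as an approximate unit on it. Second, and more structurally, a single cut-down $x=p_{M+1,M'}\,q^{1/2}\,p_{M+1,M'}$ cannot extract the $m$ rotated contributions. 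The mechanism by which $\widehat h\bigl(\sum_j\alpha_jp_jV\bigr)$ gets ``absorbed'' works one rotation at a time: a lift of $h_1(\lambda_kU)$ acts approximately as a unit on the hereditary subalgebra of the lift of $h_3(\lambda_kU)$, and this is usable only on the window $(L_{k-1},L_k]$ where $\alpha_j=\lambda_k$; near and beyond your single block $p_{M+1,M'}$ the $\alpha_j$ are arbitrary, since the conclusion must hold uniformly over all such sequences. Moreover Lemma~\ref{lem:Oct2020186AM} only realizes elements $a$ lying in the hereditary subalgebra of the single lift $A$ it produces; your $q$ lies in $\overline{A\B A}$ for $A=\sum_kA_k$, which is not covered, and the $A_k$ sit in nested (not orthogonal) corners $e_{L_k}^\perp\Mul(\B)e_{L_k}^\perp$, so $q$ admits no blockwise decomposition into pieces attached to the individual rotations.

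The paper's proof avoids this by reversing the order of operations. For each $k$ it uses Lemma~\ref{lem:Oct2020186AM} to produce an $x_k$, supported in a finite block disjoint from the others (with buffer indices chosen so that the cross terms $x_k\widehat h(\cdot)x_s^*$, $k\neq s$, are small), satisfying $x_k\widehat h\bigl(\sum_j\alpha_jp_jV\bigr)x_k^*\approx_{\delta_2}f_{\epsilon_k}(b_k)$, where $b_k\in\B$ are pairwise orthogonal truncations of strictly positive elements $a_k$ of $\overline{A_k\B A_k}$. The tolerances $\epsilon_k$ are chosen using $\F_k=\{\tau\in\F:\tau(A_k)=\infty\}$ (fullness of $\sum_kh_3(\lambda_k\cdot)$ guarantees $\bigcup_k\F_k=\F$, which is the precise way the tracial largeness is exploited), so that $d_\tau\bigl(\sum_k(b_k-2\epsilon_k)_+\bigr)>\tau(p)$ for every extreme trace, hence for all traces by finiteness of the extreme boundary. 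Only \emph{then} is strict comparison invoked to find $q\sim p$ inside $\mathrm{her}\bigl(\sum_k(b_k-2\epsilon_k)_+\bigr)$, on which $\sum_kf_{\epsilon_k}(b_k)$ acts exactly as a unit, so that compressing $x\widehat h(\cdot)x^*$, with $x=\sum_kx_k$, by $q$ collapses to $q$ up to $m^2\delta_2<\epsilon$. Producing the projection after, and inside, the orthogonal family of blockwise approximants (rather than before, inside $\mathrm{her}(\sum_kA_k)$) is the missing idea; without it your chaining step has no justification.
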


\begin{proof}
Let $\F$ be the finitely many extreme points of $T(\B)$.

Let $L, L' \geq 1$ be arbitrary.

Let $\epsilon > 0$ and let $\delta_2 > 0$ be any constant for which $\delta_2 < \frac{\epsilon}{m^2}$.

We construct a elements
$L_j$, $A_j$, $b_j$, $\epsilon_j$, $M$, $M'_j$, $M''_j$, $M'''_j$
and $x_j$ for $1 \leq j \leq m$.
The construction is by induction on $j$.\\

\noindent \emph{Basis step:} $j = 1$.

By Lemma \ref{lem:Oct2020186AM}, choose
$M > L'$ and contractive positive
$A_1 \in e_{M}^{\perp} \Mul(\B) e_{M}^{\perp}$  such that
$$\pi(A_1) = h_3(\lambda_1 U).$$

We let $$M'_1 =_{df} M.$$

Let $\F_1 =_{df} \{ \tau \in \F : \tau(A_1) = \infty \}$.

Let $a_1 \in \overline{A_1 \B A_1}$ be a strictly positive element.
Choose
$\epsilon_1 >0$  so that
$$d_{\tau}(a_1 - 2 \epsilon_1)_+ >
\tau(p)$$
for all $\tau \in \F_1$.

By Lemma \ref{lem:Oct2020186AM}, choose $L_1 > L$, $M''_1 > M'_1 = M$ and
a contractive element $x_1 \in p_{M+1, M''_1} \B p_{M+1, M''_1}$ so that
$$x_1 \widehat{h}\left(\sum_{j=1}^{\infty} \alpha_j p_j V \right) x_1^*
\approx_{\delta_2} f_{\epsilon_1}(a_1)$$
for every sequence $\{ \alpha_j \}$ in $\overline{B(0,1)}$
for which $\alpha_j = \lambda_1$ for all $L < j \leq L_1$.

We let $M'''_1 > M''_1$ be a number that is big enough so that
if we define
 $$b_1 =_{df} p_{M+1, M'''_1} a_1 p_{M+1, M'''_1}$$
then
$$d_{\tau}((b_1 - 2 \epsilon_1)_+) > \tau(p)$$
for all $\tau \in \F_1$ and
$$x_1 \widehat{h}\left(\sum_{j=1}^{\infty} \alpha_j p_j V \right) x_1^*
\approx_{\delta_2} f_{\epsilon_1}(b_1)$$
for every sequence $\{ \alpha_j \}$ in $\overline{B(0,1)}$
for which $\alpha_j = \lambda_1$ for all $L < j \leq L_1$.\\

\noindent \emph{Induction step.}
Suppose that $L_k$, $A_k$, $b_k$, $\epsilon_k$, $M'_k$, $M''_k$
$M'''_k$, $\epsilon_k$ and $x_k$ have been chosen.
We now construct the constants with $k$ replaced with $k + 1$.

Choose $N > M'''_k$ big enough so that
$$\left\| e_{M'''_k}
\widehat{h}(\sum_{j=1}^{\infty} \alpha_j p_j V) e_N^{\perp}
\right\|,
\left\| e_N^{\perp} \widehat{h}(\sum_{j=1}^{\infty} \alpha_j p_j V)
e_{M'''_k} \right\| < \delta_2,$$
for every sequence $\{ \alpha_j \}$ in $\overline{B(0,1)}$.

By Lemma \ref{lem:Oct2020186AM}, choose
$M'_{k+1} > N$
and a contractive positive element
$A_{k+1} \in e_{M'_{k+1}}^{\perp} \Mul(\B) e_{M'_{k+1}}^{\perp}$
such that
$$\pi(A_{k+1}) = h_3(\lambda_{k+1} U).$$

Let $\F_{k+1} =_{df} \{ \tau \in \F : \tau(A_{k+1}) = \infty \}.$
Let $a_{k+1} \in \overline{A_{k+1} \B A_{k+1}}$ be a strictly positive
element.  Choose $\epsilon_{k+1} > 0$ so that
$$d_{\tau}((a_{k+1} - 2 \epsilon_{k+1})_+) > \tau(p)$$
for all $\tau \in \F_{k+1}$.

By Lemma \ref{lem:Oct2020186AM},
choose $L_{k+1} > L_k$, $M''_{k+1} > M'_{k+1}$ and
contractive
$x_{k+1} \in p_{M'_{k+1} +1, M''_{k+1}} \B p_{M'_{k+1} +1, M''_{k+1}}$
so that
$$x_{k+1} \widehat{h}\left( \sum_{j=1}^{\infty} \alpha_j p_j V \right)
x_{k+1}^* \approx_{\delta_2} f_{\epsilon_{k+1}}(a_{k+1})$$
for every sequence $\{ \alpha_j \}$ in $\overline{B(0, 1)}$ for which
$\alpha_j = \lambda_{k+1}$ for all $L_k < j \leq L_{k+1}$.

Find $M'''_{k+1} > M''_{k+1}$ big enough so that if
we define
$$b_{k+1} =_{df} p_{M'_{k+1}, M'''_{k+1}} a_{k+1} p_{M'_{k+1},
M'''_{k+1}}$$
then
$$d_{\tau}((b_{k+1} - 2 \epsilon_{k+1})_+) > \tau(p)$$
for all $\tau \in \F_{k+1}$.
and
$$x_{k+1} \widehat{h}\left( \sum_{j=1}^{\infty} \alpha_j p_j V \right)
x_{k+1}^* \approx_{\delta_2} f_{\epsilon_{k+1}}(b_{k+1})$$
for every sequence $\{ \alpha_j \}$ in $\overline{B(0, 1)}$ for which
$\alpha_j = \lambda_{k+1}$ for all $L_k < j \leq L_{k+1}$.\\

This completes the inductive construction.\\

Now let $x \in p_{M+1, M'''_{m+1}} \B p_{M+1, M'''_{m+1}}$
be the contractive element defined by
$$x =_{df} \sum_{j=1}^m x_j.$$

Let $\{ \alpha_j \}$ be any sequence in $\overline{B(0,1)}$ such that
$\alpha_j = \lambda_k$ for all $L_{k-1} < \alpha_j \leq L_k$ and for all
$1 \leq k \leq m$.  (Here $L_0 =_{df} L$.)

Then
\begin{eqnarray*}
& & x \widehat{h}(\sum_{j=1}^{\infty} \alpha_j p_j V) x^* \\
& \approx_{(m^2 - m) \delta_2} & \sum_{k=1}^m
x_k \widehat{h}(\sum_{j=1}^{\infty} \alpha_j p_j V) x_k^*\\
& \approx_{m \delta_2} & \sum_{k=1}^m f_{\epsilon_k}(b_k).\\
\end{eqnarray*}

Since
$$d_{\tau}\left(\sum_{k=1}^m (b_k - 2 \epsilon_k)_+ \right) >
\tau(p)$$
for all $\tau \in T(\B)$ and since $\B$ has strict comparison for
positive elements, there exists
a projection $q \in \overline{(\sum_{k=1}^m (b_k - 2 \epsilon_k)_+) \B
(\sum_{k=1}^m (b_k - 2\epsilon_k)_+)}$ such that
$$q \sim p.$$

Hence,
$$qx \widehat{h}\left( \sum_{j=1}^{\infty} \alpha_j p_j V \right) x^* q
\approx_{m^2 \delta_2} q$$
for every sequence $\{ \alpha_j \}$ in $\overline{B(0,1)}$ for which
$\alpha_j = \lambda_k$ for every $L_{k-1} <  j \leq L_{k}$ and
all $1 \leq k \leq m$.
Since $\delta_2 < \frac{\epsilon}{m^2}$ we are  done.
\end{proof}

Let $\D$ be a unital C*-algebra. Recall that a nonzero element $x \in \D$
is said to be \emph{strongly full} in $\D$ if every nonzero element
of $C^*(x)$ is a full element of $\D$.

\begin{lem}
Say that, in addition,
$\B$ has strict comparison for positive elements and $T(\B)$ has
finitely many extreme points.

Then there exists a sequence $\{ \alpha_j \}$ in $S^1$ such that the unitary
$\pi(\sum_{j=1}^{\infty} \alpha_j p_j)U$ is a strongly full element
of $\C(\B)$.
\label{lem:TechnicalLemma}
\end{lem}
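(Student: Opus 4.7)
The plan is to construct $\{\alpha_j\}_{j=1}^\infty \subset S^1$ by a diagonal induction so that, setting $W = \pi(\sum_j \alpha_j p_j)\,U = \pi(\sum_j \alpha_j p_j V)$, the element $f(W)$ is full in $\C(\B)$ for every nonzero continuous $f : S^1 \to [0,1]$. Since $C^*(W) = C(\sigma(W))$ and every nonzero positive element of $C^*(W)$ has the form $f(W)$, this will give strong fullness of $C^*(W)$. Because the closed two-sided ideal generated by $f(W)$ contains that generated by any $g$ with $g \le c f$, and every nonzero continuous $f$ dominates a scalar multiple of some bump in a fixed countable dense family $\{g_n\}$ of nonzero bumps $g_n : S^1 \to [0,1]$, it is enough to arrange that each $g_n(W)$ is full. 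For each $n$ I fix once and for all auxiliary continuous functions $h_1^{(n)}, h_2^{(n)}, h_3^{(n)}$, a constant $\delta_1^{(n)} > 0$, and points $\lambda_1^{(n)}, \dots, \lambda_{m_n}^{(n)}$ satisfying the hypotheses of Lemma~\ref{lem:Oct2020187AM} together with $h_1^{(n)} \le g_n$; I also fix a single nonzero projection $p \in \B$.

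I enumerate pairs $(n, k)$ with $n, k \ge 1$ and process them in order. At the $(n, k)$-th stage I choose $L^{(n,k)}, L'^{(n,k)}$ larger than every integer committed so far and apply Lemma~\ref{lem:Oct2020187AM} with the data for $g_n$ and a fixed error $\epsilon_n > 0$ (independent of $k$), yielding integers $L^{(n,k)} < L_1^{(n,k)} < \cdots < L_{m_n}^{(n,k)}$, $L'^{(n,k)} < M^{(n,k)} < M'^{(n,k)}$, a projection $q_{n,k} \sim p$ in $\B$, and a contractive $x_{n,k} \in p_{M^{(n,k)}+1,\,M'^{(n,k)}} \B p_{M^{(n,k)}+1,\,M'^{(n,k)}}$ such that
\[
x_{n,k}\,\hat h^{(n)}\!\Bigl(\textstyle\sum_j \alpha_j p_j V\Bigr)\,x_{n,k}^* \approx_{\epsilon_n} q_{n,k},
\]
where $\hat h^{(n)}$ is a fixed polynomial approximating $h_1^{(n)}$, provided $\alpha_j = \lambda_s^{(n)}$ for $L_{s-1}^{(n,k)} < j \le L_s^{(n,k)}$. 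I commit $\alpha_j$ to these values on the resulting bands (setting $\alpha_j = 1$ on any unused indices). Since the bands are pairwise disjoint across all stages, the commitments are consistent and the $x_{n,k}$ are supported in pairwise orthogonal corners of $\Mul(\B)$.

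For each fixed $n$, the orthogonal strict sums $X_n = \sum_k x_{n,k}$ and $Q_n = \sum_k q_{n,k}$ converge in $\Mul(\B)$. By choosing the internal spacing parameter $N$ at each stage large enough (using the uniform-in-$\alpha$ estimates employed in the proof of Lemma~\ref{lem:Oct2020187AM}), the off-diagonal terms $x_{n,k}\,\hat h^{(n)}(\sum_j \alpha_j p_j V)\,x_{n,k'}^*$ for $k \neq k'$ can be forced to have norms summable in $(k,k')$ with their infinite sum lying in $\B$; combined with the diagonal estimates $x_{n,k}\,\hat h^{(n)}\,x_{n,k}^* \approx_{\epsilon_n} q_{n,k}$ holding in orthogonal corners, this yields
\[
\pi(X_n)\,\hat h^{(n)}(W)\,\pi(X_n)^* \approx_{c\epsilon_n} \pi(Q_n)
\]
in $\C(\B)$ for some absolute constant $c$. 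Since $\hat h^{(n)}(W)$ lies within the polynomial-approximation slack of $h_1^{(n)}(W) \le g_n(W)$, by taking $\epsilon_n$ and the slack small enough to make the total error strictly less than $1/2$, a standard projection-perturbation argument yields $\pi(Q_n) \in \overline{\C(\B)\,g_n(W)\,\C(\B)}$. Finally, because $\B$ is simple and stable and each $q_{n,k} \sim p \neq 0$, the infinite orthogonal sum satisfies $Q_n \sim 1_{\Mul(\B)}$, so $\pi(Q_n) \sim 1_{\C(\B)}$ is full in $\C(\B)$; therefore $g_n(W)$ is full.

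The main obstacle is the technical bookkeeping for the off-diagonal cross terms: at each stage the spacing parameter $N$ between the current band and all previously committed bands must be chosen large enough that $\|x_{n,k}\,\hat h^{(n)}(\sum_j \alpha_j p_j V)\,x_{n,k'}^*\|$ is summably small (uniformly in the yet-to-be-committed $\alpha_j$'s), so that the infinite sum of cross terms lies in $\B$ and is annihilated by $\pi$. The remaining pieces---the reduction to a countable family of bumps, the diagonal packing of pairwise disjoint index bands across all $(n,k)$, the projection-perturbation step, and the simple/stable input giving $Q_n \sim 1_{\Mul(\B)}$---are routine once Lemma~\ref{lem:Oct2020187AM} is available.
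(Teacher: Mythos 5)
Your overall route is essentially the paper's: iterate Lemma \ref{lem:Oct2020187AM} over a countable family of bump functions with supports shrinking around a dense set of points, commit the phases $\alpha_j$ on pairwise disjoint index bands, use the spacing of the bands (and the uniformity of the estimates over the not-yet-committed coefficients) to make the cross terms a norm-summable element of $\B$ that dies in $\C(\B)$, conclude that each bump evaluated at the perturbed unitary is full, and then deduce strong fullness from the smallness and density of the bump supports. The fixed error $\epsilon_n$ together with the projection-perturbation/ideal argument is fine, since one only needs the total error below $1$.

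There is, however, one step whose justification is genuinely inadequate: the claim that $Q_n=\sum_k q_{n,k}\sim 1_{\Mul(\B)}$ ``because $\B$ is simple and stable and each $q_{n,k}\sim p\neq 0$.'' An infinite strictly convergent orthogonal sum of projections each equivalent to a fixed nonzero $p$ need not be equivalent to $1_{\Mul(\B)}$ for a general simple stable $\B$: your $Q_n$ is (by summing implementing partial isometries strictly) equivalent to an ``infinite repeat'' of $p$, and the statement that such a full multiplier projection is equivalent to the identity is precisely a corona-factorization-type assertion, which fails for some simple separable stable C*-algebras. Under the actual hypotheses of the lemma (strict comparison and finitely many extreme traces) the claim is true, but it then requires either invoking the CFP together with an argument that $Q_n$ is norm-full in $\Mul(\B)$ (which uses the ideal structure of $\Mul(\B)$, not just simplicity of $\B$), or, more cheaply, the device used in the paper: fix in advance a decomposition $\sum_n r_n=1_{\Mul(\B)}$ into pairwise orthogonal, mutually equivalent projections and arrange (by applying Lemma \ref{lem:Oct2020187AM} with $p:=r_k$ and conjugating by the partial isometry implementing $q\sim r_k$) that the diagonal compressions approximate the $r_{j_l}$ themselves; then the relevant tail sums are equivalent to $1_{\Mul(\B)}$ by an explicit strictly convergent sum of partial isometries, with no absorption theory needed. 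Relatedly, your blockwise estimate $\pi(X_n)\widehat{h}^{(n)}(W)\pi(X_n)^*\approx\pi(Q_n)$ needs the $q_{n,k}$ to lie in the same pairwise orthogonal corners as the $x_{n,k}$; this is available from the proof of Lemma \ref{lem:Oct2020187AM} (the projection there is produced inside a hereditary subalgebra supported in the prescribed window) but not from its statement, so it should be recorded explicitly.
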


\begin{proof}

For every $k \geq 1$,
let $h_{k, 1, j}, h_{k, 2, j}, h_{k, 3, j} : S^1 \rightarrow [0,1]$
be continuous functions, $\lambda_{k,j} \in S^1$ (for $1 \leq j \leq k$),
and
$\delta_k > 0$ be such that
$$\sum_{j=1}^k h_{k, 3, j}$$
is a full element of  $C(S^1)$,
$$\overline{\supp(h_{k, 3, j})} \subset \supp((h_{k, 2, j} - \delta_k)_+),$$
$$h_{k, 1, j} h_{k, 2, j} = h_{k, 2, j},$$
$$h_{k, 3, j}(\lambda) = h_{k, 3, 1}(\lambda_{k,j} \lambda)$$
for all $1 \leq j \leq k$, and
$$\max_{1 \leq j \leq k} \diam(\supp(h_{k, 1, j})) \rightarrow 0$$
as $k \rightarrow \infty$.

Let $\{ h_l \}_{l=1}^{\infty}$ be a sequence of continuous functions
from $S^1$ to $[0,1]$ such that for all $l$, there exists $k, j$
such that $h_l = h_{k, 1, j}$
and for all $k, j$, $h_{k, 1,j}$ occurs infinitely many times as a term
in the sequence $\{ h_l \}_{l=1}^{\infty}$.

Let $\{ r_n \}$ be a sequence of pairwise orthogonal projections in
$\B$ such
$r_m \sim r_n$ for all $m, n$, and
$$\sum_{n=1}^{\infty} r_n = 1_{\Mul(\B)}$$
where the sum converges strictly.

Let $\{ \epsilon_k \}$ be a decreasing sequence in $(0,1)$ and
$\{ \epsilon_{k,l} \}$ a (decreasing in $k+l$)
biinfinite sequence in $(0,1)$ such that
$$\sum_{k=1}^{\infty} \epsilon_k < \infty$$
and
$$\sum_{1 \leq k, l < \infty} \epsilon_{k,l} < \infty.$$

Note that for every $\gamma > 0$, $Y \in \Mul(\B)$ and $y \in \B$,
there exists $N \geq 1$ such that
$$\| y Y e_N^{\perp} \| < \gamma.$$
Also, for every $\gamma > 0$,
complex polynomial $\widehat{h}$ and $L \geq 1$, there exists $N \geq 1$
so that
$$\widehat{h}\left( \sum_{j=1}^{\infty} \beta_j p_n V \right) e_N^{\perp}
\approx_{\gamma} \widehat{h}\left( \sum_{j=1}^{\infty} \beta'_j p_n V
\right)
e_N^{\perp}$$
for all sequences $\{ \beta_j \}$ and $\{ \beta'_j \}$ in
$\overline{B(0,1)}$ for which
$\beta_j = \beta'_j$ for all $j \geq L$.

By using the above two principles and by repeatedly applying
Lemma \ref{lem:Oct2020187AM} (first to $h_1$;  then to $h_2$; then to
$h_3$; and so forth),
we can find a sequence $\{ x_k \}$ of pairwise orthogonal contractive
elements of $\B$, a sequence $\{ \alpha_k \}$ in $S^1$, and a sequence
$\{ \widehat{h}_k \}$ of complex polynomials such that the
following statements hold:

\begin{enumerate}
\item $\sum_{k=1}^{\infty} x_k$ converges strictly in $\Mul(\B)$.
\item For all $k \geq 1$,
$$\max_{\lambda \in S^1} | h_k(\lambda) - \widehat{h}_k(\lambda) | <
\epsilon_k.$$
\item \label{Oct2020188AM}
 For all $k \geq 1$, there exists a subsequence $\{ x_{j_l} \}$
of $\{ x_j \}$ such that
\begin{enumerate}
\item $\left\| x_{j_l} \widehat{h}_k\left(\sum_{n=1}^{\infty} \alpha_n
p_n V\right) x_{j_s}^* \right\| < \epsilon_{l, s}$ for all
$l \neq s$, and
\item $x_{j_l} \widehat{h}_k\left(\sum_{n=1}^{\infty} \alpha_n
p_n V\right) x_{j_l}^* \approx_{\epsilon_{j_l}} r_{j_l}$  for all $l$.
\end{enumerate}
\end{enumerate}
We denote the above statements by ``(*)".

(Sketch of argument on how to choose the subsequence in (*) (3) above:
Firstly, from the construction of the sequence, we already have part (3)(b).
Next, note that, from Lemma \ref{lem:Oct2020187AM}, there is a sequence of pairwise orthogonal
projections $\{ s_j \}$ in $\B$ such that $\sum_{j=1}^{\infty} s_j$ converges strictly in $\Mul(\B)$ and
$x_j = r_j x_j s_j$ for all $j$.
Now fix a $k$.  The subsequence is  constructed in two steps (a
subsequence of a subsequence).  Step 1: Let $\{ j_i \}$ be a subsequence of the positive integers
for which $h_{j_i} = h_k$ for all $i$.  Step 2:  Extract the subsequence of
$\{ j_i \}$ by observing that for all $\delta$, for all $Y \in \Mul(\B)$, for all
$i_1$, there exists $i_2$ such that for all $i \geq i_2$,
$\| x_{j_{i_1}} Y x_{j_i}^* \| < \delta$.)

Let $m \geq 1$ be given.
We will now show that $h_m(\pi(\sum_{n=1}^{\infty} \alpha_n p_n) U)$ is full
in $\C(\B)$.
Let $\epsilon > 0$.
Since each term of the sequence $\{h_l\}_{l=1}^{\infty}$ is repeated infinitely many times there is some $k$ for which $\epsilon_k < \frac{\epsilon}{2}$ and $h_k = h_m$.

Choose a subsequence $\{ x_{j_l} \}$
of $\{ x_j \}$ as in (\ref{Oct2020188AM}) of (*), corresponding to $\hat h_k$.
Let $A \in \Mul(\B)_+$ be a contractive element so that
$$\pi(A) = h_k(\pi(\sum_{n=1}^{\infty} \alpha_n p_n) U).$$

We can choose $L \geq 1$ great enough so that if we define
$$X =_{df} \sum_{l=L}^{\infty} x_{j_l}$$
then
$$X \widehat{h}_k\left( \sum_{n=1}^{\infty} \alpha_n p_n V \right) X^*
\approx_{\epsilon_k} X A X^*.$$

Increasing $L$ if necessary, we may assume that
$$\sum_{l \geq L} \epsilon_l + \sum_{m, n \geq L} \epsilon_{m, n}
< \frac{\epsilon}{2}.$$

Consider the projection $R =_{df} \sum_{l \ge L} r_{j_l} \in \Mul(\B)$ and note that $R \sim 1_{\Mul(\B)}$ since $\sum_{n=1}^{\infty} r_n \sim 1,$ and because all the projections $r_n$ are equivalent.
From (\ref{Oct2020188AM}) of (*),
$$X \widehat{h}_k\left( \sum_{n=1}^{\infty} \alpha_n p_n V \right) X^*
\approx_{\delta} R $$
where
$$\delta =_{df} \sum_{l=L}^{\infty} \epsilon_{j_l} +
\sum_{L \leq l, s < \infty} \epsilon_{l, s}.$$

Therefore,
$$\| X A X^* - R \| < \delta + \epsilon_k < \epsilon.$$
Since $R \sim 1_{\Mul(\B)}$, there is some partial isometry $W$ implementing the equivalence so that $WW^{*} = 1_{\Mul(\B)}$ and $W^{*}W = R$.
Then
\begin{equation*}
  \| WXAX^{*}W^{*} - 1_{\Mul(\B)} \| \le \| XAX^{*} - R \| < \epsilon.
\end{equation*}

Applying $\pi$, we obtain
\begin{equation*}
  \| \pi(WX)\pi(A)\pi(X^{*}W^{*}) - 1_{\C(\B)} \| < \epsilon
\end{equation*}
Therefore, $\pi(A) = h_k(\pi(\sum_{n=1}^{\infty} \alpha_n p_n) U) = h_m(\pi(\sum_{n=1}^{\infty} \alpha_n p_n) U)$ is
full in $\C(\B)$.

Since $m \ge 1$ was arbitrary, and by the definition of the sequence
$\{ h_k \}$, we claim that $\pi(\sum_{n=1}^{\infty} \alpha_n p_n)U$
is a strongly full element of $\C(\B)$.

To see this, note that every nonnegative continuous function $f \in C(S^1)$ has some $h_l$ which is in the ideal generated by $f$.
Indeed, there is some arc of positive width $\eta$ centered at $s \in S^1$ on which $f$ is greater than some $\zeta > 0$.
Since $\max_{1 \le j \le k} \diam (\supp (h_{k,1,j})) \to 0$, there is some $k$ such that the maximum of these diameters is less than $\frac{\eta}{3}$.
Moreover, since $\sum_{j=1}^k h_{k,3,j}$ is full in $C(S^1)$, there is some $1 \le j \le k$ such that $h_{k,1,j}(s) \not= 0$.
Then, because $\diam(\supp(h_{k,1,j})) < \frac{\eta}{3}$, the support of $h_{k,1,j}$ is entirely contained within the arc on which $f \ge \zeta > 0$.
Therefore $h_{k,1,j}$ is in the ideal generated by $f$.
Finally, by the definition of $\{ h_l \}_{l=1}^{\infty}$, there is some $l$ for which $h_l = h_{k,1,j}$ (in fact, there are infinitely many such $l$).

\end{proof}

\bibliographystyle{plain}
\bibliography{references}

\end{document}